\documentclass[11pt,a4paper]{article}
\usepackage{multirow}
\usepackage[leqno]{amsmath}
\usepackage{graphicx,latexsym,euscript,makeidx,color,bm}
\usepackage{epstopdf,subfigure}
\usepackage{amsmath,amsfonts,amssymb,amsthm,thmtools,mathtools,mathrsfs,enumerate}
\usepackage[colorlinks,linkcolor=blue,citecolor=red]{hyperref}



\usepackage{geometry}
\geometry{left=3cm,right=3cm,top=3.0cm,bottom=3.0cm}

\sloppy\allowdisplaybreaks[4]
\setlength{\jot}{1ex}



\def\dbE{\mathbb{E}}

  \def\cH{{\cal H}} \def\BH{{\bm H}}


\def\ms{\medskip}              
\def\bs{\bigskip}

\def\no{\noindent}          
         
\def\nn{\nonumber}         
\def\rf{\eqref}            
             
\def\deq{\triangleq}     \def\({\Big (}       \def\ba{\begin{aligned}}
\def\les{\leqslant}      \def\){\Big )}       \def\ea{\end{aligned}}
\def\ges{\geqslant}      \def\[{\Big[}        \def\bel{\begin{equation}\label}
\def\ti{\tilde}          \def\]{\Big]}        \def\ee{\end{equation}}
      \def\q{\quad}        
\def\h{\widehat}         \def\qq{\qquad}      
                                              
\def\a{\alpha}     \def\g{\gamma}     
   \def\D{\Delta}   \def\d{\delta}        
   \def\Th{\Theta}  \def\th{\theta}    \def\si{\sigma}
\def\f{\varphi}   \def\l{\lambda}  \def\m{\mu}      \def\e{\varepsilon}
\def\t{\tau}    \def\i{\infty}   \def\k{\kappa}   

\newtheoremstyle{thry}
{}      
{}      
{\sl}   
{}      
{\bf}   
{.}     
{.5em}  
{}      
\theoremstyle{thry}

\newtheorem{theorem}{Theorem}[section]
\newtheorem{proposition}[theorem]{Proposition}
\newtheorem{corollary}[theorem]{Corollary}

\theoremstyle{definition}
\newtheorem{definition}[theorem]{Definition}

\theoremstyle{remark}
\newtheorem{remark}[theorem]{Remark}

\def\punct{}
\newtheoremstyle{dotless}{}{}{\rm}{}{\bf}{\punct}{.5em}{}
\theoremstyle{dotless}

\makeatletter
   
   \@addtoreset{equation}{section}
   \newcommand{\setword}[2]{%
   \phantomsection
   #1\def\@currentlabel{\unexpanded{#1}}\label{#2}%
   }
\makeatother


\begin{document}

\title{\bf  Dynamic  Coalition Portfolio Selection with Recursive Utility}

\author{Hanxiao Wang
\thanks{School of Mathematical Sciences, Shenzhen University, Shenzhen
518060, China (Email: {\tt hxwang@szu.edu.cn}).}
~~and~~Chao Zhou\thanks{ Department of Mathematics and Risk Management Institute, National University of Singapore,
Singapore 119076, Singapore (Email: {\tt matzc@nus.edu.sg}).}
}

\maketitle

\no\bf Abstract.
\rm In this paper, we consider a dynamic coalition portfolio selection problem,
with each agent's objective given by an Epstein--Zin recursive utility.
To find a Pareto optimum, the coalition's problem is formulated as an optimization problem evolved by a multi-dimensional
forward-backward SDE. Since the evolution system has a forward-backward structure,
the problem is  intrinsically time-inconsistent.
With the dynamic-game point of view,
we rigorously develop an approach to finding the equilibrium Pareto investment-consumption strategy.
We find that the relationship between risk aversion and EIS has more
influence on the coalition's problem than that on the one-agent problem.
More interestingly, we show that the equilibrium Pareto consumption strategy associated
with the recursive utility is much more effective
than that associated with the CRRA expected utility,
which highlights the feature of  recursive utility that the
marginal benefit of consumption can depend on the future  consumption.

\ms

\no\bf Keywords. \rm Portfolio selection,  recursive utility, coalition optimization, cooperation game, time-inconsistency,
forward-backward SDE, equilibrium Pareto strategy.

\ms

\no\bf AMS subject classifications. \rm 91B10, 91B51, 91B70, 93E20, 49L12.

\section{Introduction}

Since the pioneering work  by von Neumann and Morgenstern \cite{von Neumann1944},
coalitions have occupied the center stage for research within game theory.
In the social sciences, a minimal requirement on allocative efficiency  is the so-called {\it Pareto optimality}
(see Aziz,  Brandt, and  Harrenstein \cite{Aziz-Brandt-Harrenstein2013}, for example).
A well-known approach of finding Pareto optima is to solve a new optimization problem with the subjective
given by some convex combination of each agent's subjective.
Then the optimum (if it exists) of the new  problem is exactly a Pareto optimum of the coalition.

\ms
On the other hand,
it is well recognized that the recursive utility developed by Epstein and Zin \cite{Epstein-Zin1989}
and Duffie and Epstein  \cite{Duffie-Epstein1992}
is a milestone in  macroeconomics and quantitative finance,
because it provides a simple but very useful tool to
separate investor's risk aversion and elasticity of intertemporal substitution (EIS, for short).
From an economic theory view of point, this is much more reasonable than the standard constant relative risk aversion
(CRRA, for short) expected utility. Since then, the recursive utility  has been widely applied on the study of asset pricing and
portfolio selection problems.
Another important feature of recursive utility is that  the marginal
benefit of consumption  depends not only on current
consumption but also (through the value function) on the trajectory of future consumption.
Such a property  is playing a  crucial role in the  fast-growing green portfolios,
because one has to consider some sustainable development issues when modeling the  performances
of environmental, social, and governance (ESG) investing;
see Pindyck and Wang  \cite{Pindyck2013} and Hong, Wang, and Yang \cite{Hong-Wang2021}, for example.

\ms

Motivated by the above, we study a coalition portfolio selection problem,
with each agent's performance described by an Epstein--Zin recursive  utility $Y_i$.
We  use  $\g$, $\a$, and $\rho_i$ to describe each agent's risk aversion, EIS, and  discount factor, respectively.
The main feature of our model is that the discount factor of each agent is allowed to be different from each each.
Indeed, since the discount rate is not only due to the interest rate,
it is more like a subjective preference for the time cost,
and then each agent should have an individual discount rate (see Dumas, Uppal, and Wang \cite{Dumas2000}
and Borovi\v{c}ka \cite{Borovicka2020}). For example  (see G\^{a}rleanu and Panageas \cite{Garleanu2015}), the old and the young in a family usually have different preferences for the time risk (or the time cost), due to which forcing them to use the same discount rate is really unreasonable.

\ms
The purpose of this paper is to study  the  coalition's  investment-consumption performance when
the model involves heterogeneous  discount rates.
Some interesting phenomenons produced by the  heterogeneous discount rates and the recursive utility
are revealed.

\ms

The main contribution of this paper can be briefly summarized as follows.

\ms

(i)  It is well-known (see EI Karoui, Peng, and Quenez \cite{El Karoui-Peng-Quenez1997})  by now  that
the recursive  utility is exactly the solution of some (nonlinear) backward stochastic
differential equation (BSDE, for short).  Combining this with the wealth equation,
the coalition's problem (denoted by Problem (C)) is reformulated as a maximizing problem with the controlled dynamics
described by a system of a forward SDE and a multi-dimensional BSDE. We find that Problem (C) is intrinsically time-inconsistent,
due to the appearance of backward state equations.
With the dynamic-game point of view introduced by Strotz \cite{Strotz1955},
we rigorously develop an approach to constructing the equilibrium  Pareto investment-consumption strategy.

\ms

(ii) We show that by applying the recursive utility, the effectiveness of equilibrium Pareto consumption strategy (EPCS, for short)
associated with CRRA utility can be significantly improved  in the following sense:

\ms

\begin{center}
\begin{tabular}{|c|c|c|}
  \hline
  Consumption strategy & Time-consistency & Heterogeneity \\
  \hline
  Pre-committed,\,\, CRRA utility & No & \textbf{Yes} \\
  \hline
  Equilibrium,\, \, CRRA utility & \textbf{Yes} & No \\
  \hline
 \textbf{Equilibrium,\,\, recursive utility} & \textbf{Yes} & \textbf{Yes} \\
  \hline
\end{tabular}

\ms
Table 1: Comparison between recursive utility and CRRA utility
\end{center}

\noindent
In the CRRA utility case, the EPCS of  agent $i$ takes the following form:
$$
 \bar c_i(t)x=\bigg(\sum_{i=1}^N\th_i(t)\bigg)^{1\over\a-1} x,\qq i=1,...,N,
$$
where $\th_i$ can be uniquely determined by solving some ordinary differential equations (ODEs, for short).
Note that the above EPCS $\bar c_i$  does not depend on the parameter $i$.
Since the agents' discount rates are heterogeneous,
such a consumption strategy is unrealistic from an economic point of view.
Thus, it is not very effective to use the CRRA utility to model a dynamic coalition portfolio selection problem.
However, this inadequacy can be improved by replacing the CRRA utility by a recursive one.
Indeed, in the recursive case, the EPCS reads:
$$
 \bar c_i(t)x=\bigg(\sum_{i=1}^N\th_i(t)\bigg)^{1\over\a-1}\th_i(t)^{1-\g-\a\over(1-\a) (1-\g)} x,\qq i=1,...,N.
$$
It is noteworthy to point out that this phenomenon is 
similar to the equilibrium investment performance in dynamic mean-variance  portfolio choices,
in which the equilibrium investment strategy is independent of current wealth for the standard criterion
(see Basak and Chabakauri \cite{Basak-Chabakauri2010}),
but is proportional to current wealth for the log return (see Dai, Jin, Kou, and  Xu \cite{Dai2021}).

\ms
(iii) We find that the relationship between risk aversion $\g$ and EIS $(1-\a)^{-1}$ has more influence
on the coalition's problem than that on the one-agent problem.

\ms

Let $c^*_i$ be the optimal consumption strategy associated with the discount rate $\rho_i$ in the one-agent problem,
and  $\bar c_i$ be the equilibrium consumption strategy associated with the discount rate $\rho_i$ in the  coalition's problem.
Let $0<\rho_i<\rho_j$, then we have

\ms

\begin{center}
\begin{tabular}{|c|c|c|}
  \hline
  Consumption strategy with $\g\in(1-\a,1)$ & Relationship between $c_i$ and $c_j$ \\
  \hline
  One-agent problem & $c^*_i\leq c^*_j$  \\
  \hline
  Coalition's problem & $\bar c_i\leq \bar c_j$\\
  \hline
\end{tabular}

\ms
Table 2: Comparison between one-agent and coalition's problems with $\g\in(1-\a,1)$
\end{center}

\ms

\begin{center}
\begin{tabular}{|c|c|c|}
  \hline
  Consumption strategy with $\g\in(0,1-\a)$ & Relationship between $c_i$ and $c_j$ \\
  \hline
  One-agent problem & $c^*_i\leq c^*_j$  \\
  \hline
  Coalition's problem & $\bar c_i\geq\bar c_j$\\
  \hline
\end{tabular}

\ms
Table 3: Comparison between one-agent and coalition's problems with $\g\in(0,1-\a)$
\end{center}
This shows that the coalition's problem
is more sensitive with the  relationship between  risk aversion and EIS than the one-agent problem.

\ms

(iv) We show that the coalition manager only needs to decide the whole coalition's investment strategy,
but has to make a decision  for each agent's consumption strategy.
This predicts  that {\it distributing the wealth in a coalition is much more complicated than making money},
because it is the common aim of each agent to make as much money as possible,
but how to spend money is nobody else's business but one's own.

\subsection{Literature review on recursive coalition problem}

The coalition problem (also called a corporation game, a social planner problem,  a group decision maker problem,
or a multi-agent problem) has been studied for a long history in economics and  social sciences.
Some  newest results can be found in Morellec and  Schsrhoff \cite{Morellec}, Aziz, Brandt, and  Harrenstein \cite{Aziz-Brandt-Harrenstein2013},
 Jackson and Yariv \cite{Jackson-Yariv2015},  Garlappi, Giammarino, and  Lazrak \cite{Garlappi2017}, and the references cited therein.
After the pioneering works \cite{Duffie-Epstein1992,Duffie-Epstein1992-1},
the coalition problem with recursive utility has been studied by
Dana and LeVan \cite{Dana1990}, Ma \cite{Ma1993}, Duffie, Geoffard, and Skiadas \cite{Duffie-Geoffard-Skiadas-1994}, Kan \cite{Kan1994},
 Dumas,  Uppal, and  Wang \cite{Dumas2000},
Anderson \cite{Anderson2005}, G\^{a}rleanu and Panageas \cite{Garleanu2015}, and  Borovi\v{c}ka \cite{Borovicka2020}, to mention a few.
However, the solutions obtained in these  works  are  pre-committed, and  we are going to find a sophisticated solution in the paper,
since the coalition's problem is  intrinsically time-inconsistent (see  Jackson and Yariv \cite{Jackson-Yariv2015}).

\ms

More importantly, most of the  above works focus on generalizing the CRRA model to the recursive
case, but the consequence of  the separation advantage between risk aversion and EIS in recursive utility is not very clear.
We will clarify this crucial issue and {\it provides a strong evidence to support the  effectiveness of introducing recursive utility in the paper.}

\subsection{Literature review on optimal control theory for FBSDEs}
In the control theory, the optimal control problem for foward-backward SDEs (FBSDEs, for short) was
initially studied by Peng  \cite{Peng1993}, and has been developed by
 Dokuchaev and Zhou \cite{Dokuchaev1999}, Lim and Zhou \cite{Lim-Zhou2001}, Yong \cite{Yong2010},
 Cvitani\'{c} and Zhang \cite{Cvitanic-Zhang2012},
Wang, Wu, and Xiong \cite{Wang-Wu-Xiong2013},
 Hu, Ji, and Xue \cite{Hu-Ji-Xue2018},
 and Sun, Wang, and Wen \cite{Sun-Wang-Wen2023}, etc.
All these results are essentially concerned with the Pontryagin's maximum principle, using a variation method.
In the paper, we focus on the time-consistency, which corresponds to  Bellman's dynamical programming principle approach.

\subsection{Literature review on time-inconsistent control theory}
The seminal paper \cite{Strotz1955} by Strotz   was the first to formalize the consistent planning problem
with  a dynamic-game point of view. Since then, the equilibrium approach by sophisticated agents has received very strong attention;
see, for example, Pollak \cite{Pollak1968}, Laibson \cite{Laibson1997}, Harris and Laibson \cite{Harris},
Basak and Chabakauri \cite{Basak-Chabakauri2010},  Cao and Werning \cite{Cao2018}, and Dai, Jin, Kou, and Xu \cite{Dai2021} on the economic side,
and Ekeland and Pirvu \cite{Ekeland2008}, Ekeland and Lazrak \cite{Ekeland2010},   Hu, Jin, and Zhou \cite{Hu-Jin-Zhou2012}, Yong \cite{Yong2012,Yong2014}, Bj\"{o}rk,  Murgoci, and Zhou \cite{Bjork-Murgoci-Zhou2014},
Bj\"{o}rk, Khapko, and Murgoci \cite{Bjork-Khapko-Murgoci2017} on the mathematical side. However,
these existing approaches cannot be applied in our coalition model with recursive utility,
because the coalition's dynamic contains a multi-dimensional backward SDE \rf{M-BSDE}.
Roughly speaking, the time-inconsistency of our model is caused purely by the backward state,
rather than by the non-exponential discount \cite{Strotz1955,Pollak1968,Ekeland2008,Ekeland2010,Yong2012,Yong2014}
or by the conditional expectation operator \cite{Basak-Chabakauri2010,Hu-Jin-Zhou2012,Bjork-Murgoci-Zhou2014,Bjork-Khapko-Murgoci2017}.
We will overcome this mathematical difficulty by modifying the approach developed in our recent work \cite{Wang-Yong-Zhou2022}.

\ms

The rest of this paper is organized as follows.
In Section \ref{sec:model}, we formulate the problem. The time-inconsistency of the model is shown in Section \ref{sec:TI}.
The main results are presented in Section \ref{sec:main-results}.
In Section \ref{sec:numercis}, we provide some numerical results and make some comparisons.
The proofs are collected in Section \ref{sec:Proofs}.

\section{The model and problem formulation}\label{sec:model}

Consider the following forward SDE for the coalition's wealth process $X$ driven by a one-dimensional Brownian motion $W$:
\bel{state}\left\{\begin{aligned}
dX(s)&=\bigg[\nu X(s)+(\m-\nu)\sum_{i=1}^N \pi_i(s)X(s)-\sum_{i=1}^N c_i(s)X(s)\bigg]ds\\
&\q+\si \sum_{i=1}^N \pi_i(s)X(s) dW(s),\qq s\in[t,T],\\
X(t)&=x,\end{aligned}\right.
\ee
where $\nu$ is the riskless interest rate, $\mu>\nu$ is the appreciation rate of one stock,
$\si>0$ is the volatility, $\pi_i\in L_+^\i(0,T)$ is agent $i$'s proportion of dollar amount
invested in the stock, and   $c_i\in L_+^\i(0,T)$ is the consumption of agent $i$.
Here, the space $L_+^\i(0,T)$ is defined by
$$
L_+^\i(0,T)=\big\{\f:[0,T]\to[0,+\i)\,|\, \f \hbox{ is essentially bounded}\big\}.
$$
For simplicity, we use the following  notations of the investment-consumption strategies from now on:
$$\pi=(\pi_1,...,\pi_N), \qq c=(c_1,...,c_N).$$

\ms
Naturally, agent $i$ wants to maximize his/her utility functional
\bel{utility}J_i(t,x;\pi_i,c_i)= Y_i(t),\ee
where $Y_i$, called an Epstein--Zin recursive utility
(see \cite{Duffie-Epstein1992,Duffie-Epstein1992-1,Duffie-Lions1992,El Karoui-Peng-Quenez1997}, for example),
is determined by
\bel{M-BSDE}Y_i(s)=\dbE_s\bigg[\int_s^T g_i\big(c_i(r)X(r),Y_i(r)\big)dr+h_i(X(T))\bigg],\qq s\in[t,T],\ee
with
$$
g_i(q,y)=\a^{-1}((1-\g)y)^{1-{\a\over 1-\g}}\[q^\a-\rho_i((1-\g)y)^{{\a\over 1-\g}}\],
$$
and
$$
 h_i(x)={x^{1-\g}\over 1-\g}.
$$
The parameter  $\g\in(0,1)$ controls the
risk aversion of the agents,  $ (1-\a)^{-1}>0$ with $\a\in(0,1)$ gives the agents' EIS,
and $\rho_i$ is the subjective discount factor of agent $i$ (which can be different for the different $i$).
If $\a=1-\g$, we have the additively separable aggregator:
\bel{ASAU}
g_i(q,y)=\a^{-1}\[q^\a-\rho_i(1-\g)y\],
\ee
and then
\bel{ASAU-CRRA}
Y_i(s)=\a^{-1}\dbE_s\bigg[\int_s^T e^{-\a\rho_i(r-s)}\big(c_i(r)X(r)\big)^\a dr+e^{-\a\rho_i(T-s)} X(T)^\a\bigg],\qq s\in[t,T],
\ee
which is exactly the standard CRRA expected utility.
Unlike the CRRA utility \rf{ASAU-CRRA}, the recursive utility as defined by \rf{M-BSDE} disentangles risk
aversion from the EIS.

\ms
Since our model is formulated in a Markovian framework,
we only focus on the following Markovian solution to \rf{M-BSDE} for simplicity.

\begin{proposition}\label{prop:well-RU}
Under the assumptions presented above, the following ODE admits a unique positive solution:
\bel{main1-prop:well-RU}
\left\{
\begin{aligned}
&\dot{\th}_i(s)+(1-\g)\th_i(s)\bigg[(\mu-\nu)\sum_{j=1}^N\pi_j(s)-\sum_{j=1}^N c_j(s)+\nu-{\g\si^2\over 2}\bigg(\sum_{j=1}^N \pi_j(s)\bigg)^2-{\rho_i\over\a}\bigg]\\
&\qq
+\a^{-1}(1-\g)\th_i(s)^{1-\g-\a\over1-\g}c_i(s)^\a=0,\\
&\th_i(T)=1.
\end{aligned}\right.
\ee
Moreover,
\bel{main2-prop:well-RU}
Y_i(s)={\th_i(s)\over 1-\g}X(s)^{1-\g},\qq s\in[0,T]
\ee
is a solution to  \rf{M-BSDE}.
\end{proposition}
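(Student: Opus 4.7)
The plan is to establish the two claims in order: (i) the ODE \rf{main1-prop:well-RU} admits a unique positive solution, and (ii) the candidate $Y_i(s)=\frac{\th_i(s)}{1-\g}X(s)^{1-\g}$ solves the BSDE \rf{M-BSDE}. Step (i) is where the real work lies; step (ii) is a direct It\^o verification.

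For (i), the obstruction to a Cauchy--Lipschitz argument is that the nonlinear term in \rf{main1-prop:well-RU} carries the exponent $\frac{1-\g-\a}{1-\g}$, whose sign is not uniform across the admissible parameter ranges ($\a+\g<1$, $=1$, or $>1$); in particular $\th_i\mapsto\th_i^{(1-\g-\a)/(1-\g)}$ is not Lipschitz near $0$. The key device I propose is the power substitution $\th_i(s)=\psi_i(s)^{(1-\g)/\a}$, which is an order-preserving bijection from $(0,\i)$ onto $(0,\i)$ since $(1-\g)/\a>0$. A short computation, using $\th_i^{(1-\g-\a)/(1-\g)}=\psi_i^{(1-\g-\a)/\a}$ and $\dot\th_i=\frac{1-\g}{\a}\psi_i^{(1-\g-\a)/\a}\dot\psi_i$, turns \rf{main1-prop:well-RU} into the \emph{linear} first-order ODE
\begin{equation*}
\dot\psi_i(s)+\a A_i(s)\psi_i(s)+c_i(s)^\a=0,\qq \psi_i(T)=1,
\end{equation*}
where $A_i(s):=(\m-\nu)\sum_{j}\pi_j(s)-\sum_{j}c_j(s)+\nu-\frac{\g\si^2}{2}\big(\sum_j\pi_j(s)\big)^2-\rho_i/\a$ is bounded on $[0,T]$ thanks to $\pi_j,c_j\in L^\i_+(0,T)$. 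Variation of constants then yields
\begin{equation*}
\psi_i(s)=\exp\!\Big(\int_s^T\a A_i(u)\,du\Big)+\int_s^T c_i(r)^\a\exp\!\Big(\int_s^r\a A_i(u)\,du\Big)dr,
\end{equation*}
which is strictly positive and uniformly bounded on $[0,T]$. Setting $\th_i=\psi_i^{(1-\g)/\a}$ produces a positive bounded solution to \rf{main1-prop:well-RU}; uniqueness in the class of positive solutions follows because the substitution is a bijection and the linear equation has a unique solution.

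For (ii), \rf{state} is a linear SDE with deterministic bounded coefficients, so $X(s)>0$ a.s.\ on $[t,T]$ and $X^{1-\g}$ has moments of all orders. Applying It\^o's formula to $Y_i(s)=\frac{\th_i(s)}{1-\g}X(s)^{1-\g}$ and substituting \rf{main1-prop:well-RU} for $\dot\th_i$ cancels the forward contributions and collapses the drift to $\big[\a^{-1}\rho_i\th_i(s)-\a^{-1}\th_i(s)^{(1-\g-\a)/(1-\g)}c_i(s)^\a\big]X(s)^{1-\g}$. A parallel computation using $(1-\g)Y_i=\th_i X^{1-\g}$ to rewrite $g_i$ shows that this drift equals $-g_i(c_i(s)X(s),Y_i(s))$, while $Y_i(T)=h_i(X(T))$ follows from $\th_i(T)=1$. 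The diffusion coefficient is $\th_i\si\sum_j\pi_j X^{1-\g}$, whose stochastic integral is a true martingale by the moment control on $X$ and the boundedness of $\th_i$ and $\pi_j$; taking $\dbE_s[\cd]$ in the integral form then yields \rf{M-BSDE}.

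The main technical obstacle is confined to step (i): without the linearizing substitution, the non-Lipschitz nonlinearity blocks both existence (via a possible collapse of $\th_i$ to $0$ when the exponent is negative) and uniqueness within positive solutions. Once this substitution is in hand, everything reduces to a linear ODE and a single matched It\^o computation.
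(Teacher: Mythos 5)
Your proof is correct, but your treatment of the ODE takes a genuinely different route from the paper's. For existence and uniqueness of a positive solution to \rf{main1-prop:well-RU}, the paper does not linearize: it reduces the problem to a priori bounds $0<\d\les\th_i\les\k$ valid for any positive solution on a subinterval $[t_0,T]$ with constants independent of $t_0$ --- the lower bound by dropping the nonnegative term $\a^{-1}(1-\g)\th_i^{(1-\g-\a)/(1-\g)}c_i^\a$ and comparing with a linear ODE, the upper bound by a case split on the sign of $1-\g-\a$ (using $\th_i^{(1-\g-\a)/(1-\g)}\les 1+\th_i$ when $\g\in(0,1-\a]$, and $\th_i^{(1-\g-\a)/(1-\g)}\les\d^{(1-\g-\a)/(1-\g)}$ when $\g\in(1-\a,1)$) followed again by comparison with linear ODEs; global existence then follows by continuation, and uniqueness is dismissed as ``standard.'' Your substitution $\th_i=\psi_i^{(1-\g)/\a}$ bypasses all of this: the computation checks out ($\th_i^{(1-\g-\a)/(1-\g)}=\psi_i^{(1-\g-\a)/\a}$, and the exponents collapse so that the equation becomes $\dot\psi_i+\a A_i\psi_i+c_i^\a=0$, $\psi_i(T)=1$), the variation-of-constants formula is explicit, strictly positive and bounded, and uniqueness of positive solutions transfers through the order-preserving bijection exactly as you argue. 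What your route buys is an explicit closed form and a much shorter argument; what the paper's comparison-theorem route buys is reusability --- the same a priori bound machinery is what later carries \autoref{thm:well-posedness} for the equilibrium ODE \rf{M-HJB}, where the coupling through $\sum_{j}\th_j(t)$ destroys the linearizing substitution, which is presumably why the authors set up that technique already here. Your It\^o verification of \rf{main2-prop:well-RU}, including the identification of the drift with $-g_i(c_i X,Y_i)$ and the martingale justification via moment bounds on $X$, is precisely the ``apply It\^o directly'' step the paper only sketches. One small point worth a sentence in a final write-up: since $\pi_j,c_j$ are only essentially bounded, both \rf{main1-prop:well-RU} and your linear equation should be read in integral (Carath\'eodory) form; existence and uniqueness for the linear equation still hold in that setting, so your argument is unaffected.
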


In our model, each agent $i$  concerns with not only  his/her own consumption $c_i$
but also the coalition's whole wealth $X$.
This is very natural in a coalition problem; for example, each family member usually 
hopes to maximize both his/her own consumption amount
and the whole wealth possessed by the family.

\ms

The minimal requirement on allocative efficiency is the so-called Pareto optimality (see Aziz, Brandt, and Harrenstein \cite{Aziz-Brandt-Harrenstein2013}), which is defined as follows:

\begin{definition}
A pair $(\pi^*,c^*)\in (L_+^\i(0,T))^N\times ( L_+^\i(0,T))^N$ is called a {\it Pareto optimal investment-consumption strategy}
if there exists no other pair $(\pi,c)\in (L_+^\i(0,T))^N\times ( L_+^\i(0,T))^N$ such that
$$
J_i(t,x;\pi_i,c_i)\ges J_i(t,x;\pi^*_i,c^*_i),\qq i=1,...,N,
$$
and at least one of the above $N$ inequalities is strict.
\end{definition}

To find a Pareto optimal investment-consumption strategy,
we introduce the following new problem: Maximize the following objective
\bel{leader}
J^\l (t,x;\pi,c)= \sum_{i=1}^N\l_i J_i(t,x;\pi_i,c_i),
\ee
subject to \rf{state} and \rf{utility},
where $\l_i\in(0,1)$ with $\sum_{i=1}^N\l_i=1$.

\ms

By the standard results in game theory (see Yong \cite[Proposition 3.2.1]{Yong2014book}, for example),
we know that the optimal investment-consumption strategy $(\pi,c)$ of the new problem
is a Pareto optimum of the coalition.
Since we focus on clarifying the performance of heterogeneous discount rates,
we take $\l_i={1\over N}$ for $i=1,2,...,N$ to reduce the additional heterogeneous terms.
In economic languages, all the agents are treated by the coalition equally.
Then the coalition problem's objective \rf{leader} can be rewritten as follows:
\bel{leader-1}
J^\l (t,x;\pi,c)= {1\over N}\sum_{i=1}^N J_i(t,x;\pi_i,c_i).
\ee

\section{Time-inconsistency of the model}\label{sec:TI}

In this section, we shall show that  the Pareto optimum of the coalition is time-inconsistent in general.

\ms

For simplicity, we consider a special case of the model with  $\a= 1-\g$,
by which the recursive utility reduces to a CRRA one.
It follows that
\begin{align}
J^\l (t,x;\pi,c)&={1\over \a N} \dbE_t\bigg[\bigg(\sum_{i=1}^N e^{-\a\rho_i(T-t)}\bigg)X(T)^\a\nn\\
&\q+\int_t^T\bigg(\sum_{i=1}^N\ e^{-\a\rho_i(r-t)}\big(c_i(r)X(r)\big)^\a \bigg) dr\bigg].
\label{cost-N=2}
\end{align}
For any fixed initial time $t\in[0,T)$,
we introduce the following  dynamic problem over $[t,T]$: For any given $\t\in[t,T)$,
maximize
\begin{align*}
J^\l (\t,x;\pi,c)&={1\over \a N} \dbE_\t\bigg[\bigg(\sum_{i=1}^N e^{-\a\rho_i(T-t)}\bigg)X(T)^\a\\
&\q+\int_\t^T\bigg(\sum_{i=1}^N\ e^{-\a\rho_i(r-t)}\big(c_i(r)X(r)\big)^\a \bigg) dr\bigg],
\end{align*}
subject to
\begin{equation*}\left\{\begin{aligned}
dX(s)&=\bigg[\nu X(s)+(\m-\nu)\sum_{i=1}^N \pi_i(s)X(s)-\sum_{i=1}^N c_i(s)X(s)\bigg]ds\\
&\q+\si \sum_{i=1}^N \pi_i(s)X(s) dW(s),\qq s\in[\t,T],\\
X(\t)&=x.\end{aligned}\right.
\end{equation*}
Note that  the above is a standard  optimal control problem,
because $t$ is fixed in \rf{cost-N=2} and only plays a role as a parameter.
In the literature, such type of problems is usually called an {\it auxiliary problem} of the original problem.
By the standard DPP approach,
the optimal investment-consumption strategy $(\pi^{t,*},c^{t,*})$ with initial time $t$
can be explicitly obtained:
\begin{align}
&\sum_{i=1}^N\pi_i^{t,*}(s)={\mu-\nu\over (1-\a)\si^2},\qq s\in[t,T],\label{Pre-CRRA}\\
& c_i^{t,*}(s)=\bigg({N\th^t(s)\over  e^{-\a\rho_i(s-t)}}\bigg)^{1\over \a-1},\qq s\in[t,T],\q i=1,...,N, \label{Pre-CRRA1}
\end{align}
in which $\th^t$ is the unique positive solution to the following ODE:
$$
\left\{
\begin{aligned}
&\dot{\th}^t(s)+{\a(\mu-\nu)^2 \over 2(1-\a)\si^2}\th^t(s)
+(1-\a)N^{1\over \a-1}\bigg(\sum_{i=1}^N e^{\a\rho_i(s-t)\over \a-1}\bigg)\th^t(s)^{\a\over \a-1}=0,\\
&\th^t(T)={1\over N}\sum_{i=1}^N e^{-\a\rho_i(T-t)}.
\end{aligned}
\right.
$$
Since the relationship $\rho_i =\rho_j$ does not hold in general,
it is easily shown that there exists at least an $i_0\in\{1,...,N\}$ such that $c_{i_0}^{t,*}$ depends on $t$.
Indeed, without loss of generality, we can suppose that there exist two indexes $i_0,j_0\in\{1,...,N\}$ such that  $\rho_{i_0}\neq\rho_{j_0}$,
and $c_{j_0}^{t,*}$ does not depend on $t$. Then
$$
 c_{i_0}^{t,*}(s)=\bigg({N\th^t(s)\over  e^{\a\rho_{i_0}(s-t)}}\bigg)^{1\over \a-1}= c_{j_0}^{t,*}(s)
  e^{\a(\rho_{j_0}-\rho_{i_0})(s-t)\over 1-\a},\qq s\in[t,T].
$$
Note that $c_{j_0}^{t,*}$ does not depend on $t$ and $e^{\a(\rho_{j_0}-\rho_{i_0})(s-t)\over 1-\a}$
contains a parameter $t$, so the product $ c_{i_0}^{t,*}$ must depend on $t$.
Thus, the problem is time-inconsistent and the strategy given by \rf{Pre-CRRA} is a pre-committed optimum.

\ms

We remark that since $c_i\neq c_j$ in general, we cannot simplify the above problem
as a one-agent investment-consumption problem with a CRRA utility and a quasi-exponential discount,
as studied by Ekeland,  Mbodji, and Pirvu \cite{Ekeland2012}.
By the equilibrium  HJB equation approach
(see  Yong \cite{Yong2012,Yong2014}, for example) or the extended HJB equation approach
(see Bj\"{o}rk, Khapko, and Murgoci \cite{Bjork-Khapko-Murgoci2017}, for example),
the time-consistent equilibrium  investment-consumption strategy (which is also called a sophisticated solution) associated with \rf{state} and \rf{cost-N=2} can be obtained.
However, for the general case with $\g\neq 1-\a$,   the situation of controlled backward state equation is not
avoidable, and then the approach developed in \cite{Yong2012,Yong2014,Bjork-Khapko-Murgoci2017}
cannot be applied.

\section{The main result}\label{sec:main-results}

In finance,  time-consistency is an essential requirement of the practicable strategies.
When the problem is time-inconsistent, it is no longer clear what we mean by the word
``optimal", because a  strategy which is optimal for one choice of starting point in
time will generically not be optimal at a later point in time (see Bj\"{o}rk,  Murgoci, and Zhou \cite{Bjork-Murgoci-Zhou2014}).
Thus, instead of finding a Pareto optimum,
we are going to find an  equilibrium Pareto  strategy for the coalition,
with the dynamic-game theoretic framework introduced by Strotz \cite{Strotz1955}.

\subsection{Equilibrium Pareto strategy}
Inspired by \cite{Hu-Jin-Zhou2012,Yong2012,Bjork-Khapko-Murgoci2017}, we introduce the following definition,
which gives the Pareto optimality in a dynamic-game sense.

\begin{definition}\label{def:eps}
We call $(\bar\pi,\bar c)\in (L_+^\i(0,T))^N\times (L_+^\i(0,T))^N$
an {\it equilibrium Pareto investment-consumption strategy} (EPICS, for short)
of the coalition, if the following holds:
\bel{def-equilibrium1}
\limsup_{\e\to 0^+} {J^\l(t,\bar X(t);\bar\pi^\e,\bar c^\e)-J^\l(t,\bar X(t);\bar\pi,\bar c)\over\e}\les 0,\ee
for any $t\in[0,T)$ and $(\pi, c)\in (L_+^\i(0,T))^N \times ( L_+^\i(0,T))^N$, with
\bel{def-equilibrium2}
\bar\pi_i^\e(s)\deq\left\{\begin{aligned}
&\bar\pi_i(s),\q &s\in[t+\e,T];\\
& \pi_i(s), \q &s\in[t,t+\e),\end{aligned}\right.\qq i=1,...,N,
\ee
and
\bel{def-equilibrium3}
\bar c_i^\e(s)\deq\left\{\begin{aligned}
&\bar c_i(s),\q &s\in[t+\e,T];\\
& c_i(s), \q &s\in[t,t+\e),\end{aligned}\right. \qq i=1,...,N.
\ee
\end{definition}

\begin{remark}
We only consider the closed-loop strategy in the paper.
The intuition behind \autoref{def:eps} is similar to that in \cite{Hu-Jin-Zhou2012,Yong2012,Bjork-Khapko-Murgoci2017}.
At any given time $t$, the controller is playing a game  with all his/her incarnations in the future by maximizing his/her 
utility functional on $[t,t+\e)$, and knowing that he/she will lose the control of the system beyond $t+\e$.
This can be regarded a leader-follower game with infinitely many players.
\end{remark}

\begin{remark}
We remark that there are two games in our model.
The agents not only  play against their incarnations in the future, but also
play a cooperation game with each other.
\end{remark}

\subsection{Equilibrium HJB equation: Verification theorem and well-posedness}

From Section \ref{sec:TI}, we see that the pre-committed optimal strategy in the CRRA utility case
can be obtained by introducing an auxiliary problem. However,  extending this approach to the recursive utility case is challenge.
In other words, it is not easy to find a pre-committed optimal strategy for the coalition in recursive utility case,
due to the appearance of controlled multi-dimensional BSDEs.
Thus,  with the equilibrium strategy $(\bar\pi,\bar c)$ having been obtained on $[t+\e,T]$,
the optimization problem on $[t,t+\e)$, which is the  sophisticated problem of the original, still cannot be solved by the DPP approach. This brings some new difficulties to finding the equilibrium strategy (i.e., the sophisticated solution) in our model comparing with Yong \cite{Yong2012,Yong2014} and Bj\"{o}rk, Khapko, and Murgoci \cite{Bjork-Khapko-Murgoci2017}.

\ms

Motivated by our recent work \cite{Wang-Yong-Zhou2022} with Yong, we introduce the following {\it equilibrium HJB equation}
(or called an {\it extended HJB equation}):
$$
\left\{
\begin{aligned}
&\Th^i_t(t,x)+\Th^i_x(t,x)\bigg[\nu x+(\mu-\nu)\sum_{i=1}^N\bar \pi_i(t)x-\sum_{i=1}^N\bar c_i(t)x\bigg]\\
&\qq+{1\over 2}\Th^i_{xx}(t,x)\bigg(\si\sum_{i=1}^N\bar \pi_i(t)x\bigg)^2+\a^{-1}\big[(1-\g)\Th^i(t,x)\big]^{1-{\a\over 1-\g}}\\
&\qq\times\[(\bar c_i(t)x)^\a-\rho_i\big((1-\g)\Th^i(t,x)\big)^{{\a\over 1-\g}}\]=0,\qq i=1,...,N,\\
&V_t(t,x)+V_x(t,x)\bigg[\nu x+(\mu-\nu)\sum_{i=1}^N\bar \pi_i(t)x-\sum_{i=1}^N\bar c_i(t)x\bigg]\\
&\qq+{1\over 2}V_{xx}(t,x)\bigg(\si\sum_{i=1}^N\bar \pi_i(t)x\bigg)^2+{1\over N}\sum_{i=1}^N\a^{-1}\big[(1-\g)\Th^i(t,x)\big]^{1-{\a\over 1-\g}}\\
&\qq\times\[(\bar c_i(t)x)^\a-\rho_i\big((1-\g)\Th^i(t,x)\big)^{{\a\over 1-\g}}\]=0,\\
&\Th^i(T,x)={x^{1-\g}\over 1-\g},\qq i=1,...,N,\qq V(T,x)={x^{1-\g}\over 1-\g},
\end{aligned}\right.
$$
in which the equilibrium investment-consumption strategy $(\bar\pi,\bar c)$
(if it exists)
is  determined by the following Hamiltonian:
\begin{align}\label{Hamiltonian-condition}
&\cH(t,x,\Th(t,x),V_x(t,x),V_{xx}(t,x), \bar \pi(t),\bar c(t))\nn\\
&\q=\sup_{\pi,c}\cH(t,x,\Th(t,x),V_x(t,x),V_{xx}(t,x), \pi, c),
\end{align}
with
\begin{align}\label{Hamiltonian}
&\cH(t,x,\Th(t,x),V_x(t,x),V_{xx}(t,x), \pi, c)\nn\\
&\q=V_x(t,x)\bigg[\nu x+(\mu-\nu)\sum_{i=1}^N \pi_i x-\sum_{i=1}^N c_ix\bigg]+{1\over 2}V_{xx}(t,x)
\bigg(\si\sum_{i=1}^N \pi_ix\bigg)^2\nn\\
&\qq+{1\over N}\sum_{i=1}^N g_i(c_ix,\Th_i(t,x))\nn\\
&\q=V_x(t,x)\bigg[\nu x+(\mu-\nu)\sum_{i=1}^N \pi_ix-\sum_{i=1}^N c_ix\bigg]+{1\over 2}V_{xx}(t,x)\bigg(\si\sum_{i=1}^N \pi_ix\bigg)^2\nn\\
&\qq+{1\over N}\sum_{i=1}^N\a^{-1}\big((1-\g)\Th^i(t,x)\big)^{1-{\a\over 1-\g}}
\[(c_ix)^\a-\rho_i\big((1-\g)\Th^i(t,x)\big)^{{\a\over 1-\g}}\].
\end{align}

Clearly, if the equilibrium strategy exists, then we have
\bel{EIS}
\sum_{i=1}^N \bar \pi_i(t)x={(\nu-\mu)V_x(t,x)\over \si^2 V_{xx}(t,x)},
\ee
and
\bel{ECS}
\bar c_i(t)x=\left[NV_x(t,x)\right]^{1\over\a-1}[(1-\g)\Th^i(t,x)]^{1-\g-\a\over (1-\a)(1-\g)},\qq i=1,...,N.
\ee
Now we make  the ansatz:
\begin{align}
&\Th^i(t,x)={1\over 1-\g}x^{1-\g}\th_i(t),\qq V(t,x)={v(t)\over 1-\g} x^{1-\g}={\sum_{i=1}^N\th_i(t)\over N(1-\g)} x^{1-\g}.\label{ansatz}
\end{align}
Then
\begin{align*}
&\Th^i_x(t,x)=\th_i(t) x^{-\g},\q \Th^i_{xx}(x)=-\g \th_i(t)x^{-\g-1},\\
&V_x(t,x)={1\over N}\sum_{i=1}^N\th_i(t)x^{-\g},\qq V_{xx}(t,x)=-{\g\over N}\sum_{i=1}^N\th_i(t)x^{-\g-1}.
\end{align*}
The equilibrium investment strategy \rf{EIS} and the equilibrium consumption strategy \rf{ECS} become
\begin{align}
\sum_{i=1}^N \bar \pi_i(t)x={(\mu-\nu)\over \g\si^2}x,\label{U}
\end{align}
and
\bel{C}
 \bar c_i(t)x=\bigg(\sum_{j=1}^N\th_j(t)\bigg)^{1\over\a-1}\th_i(t)^{1-\g-\a\over (1-\a)(1-\g)} x,\qq i=1,...,N,
\ee
respectively, with
\bel{M-HJB}
\left\{
\begin{aligned}
&\dot{\th}_i(t)+(1-\g)\th_i(t)\bigg[\nu+{(\mu-\nu)^2\over 2\g\si^2}-\bigg(\sum_{j=1}^N\th_j(t)\bigg)^{1\over\a-1}
\sum_{k=1}^N\th_k(t)^{1-\g-\a\over (1-\a)(1-\g)}\bigg]\\
&\q-(1-\g)\rho_i\a^{-1}\th_i(t)
+\a^{-1}(1-\g)\th_i(t)^{1-\g-\a\over(1-\a)(1-\g)}\bigg(\sum_{j=1}^N\th_j(t)\bigg)^{\a\over\a-1}=0,\\
&\th_i(T)=1.
\end{aligned}\right.
\ee
We call \rf{M-HJB} an {\it equilibrium ODE}.

\begin{theorem}\label{thm:VT}
Suppose that the equilibrium ODE  \rf{M-HJB} has a positive solution.
Then the  strategy $(\bar\pi,\bar c)$ given by \rf{U} and \rf{C} is an EPICS of the coalition.
\end{theorem}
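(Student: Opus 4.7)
The plan is to verify the equilibrium inequality \rf{def-equilibrium1} directly, by extracting the $O(\e)$-asymptotics of $J^\l(t,\bar X(t);\bar\pi^\e,\bar c^\e)$ around the candidate $(\bar\pi,\bar c)$ and matching the resulting rate with the Hamiltonian maximality condition \rf{Hamiltonian-condition}. The bridge between the ansatz \rf{ansatz}, the ODE system \rf{M-HJB}, and the BSDE \rf{M-BSDE} is provided by \autoref{prop:well-RU}.

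\ms

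First I would check that substituting $(\bar\pi,\bar c)$ from \rf{U}--\rf{C} into \rf{main1-prop:well-RU} collapses it onto \rf{M-HJB}; consequently \autoref{prop:well-RU} furnishes the Markovian representation $Y_i^{\bar\pi,\bar c}(s)=\frac{\th_i(s)}{1-\g}\bar X(s)^{1-\g}$, whence $J^\l(t,x;\bar\pi,\bar c)=V(t,x)$ for the $V$ in \rf{ansatz}. For the perturbed strategy $(\bar\pi^\e,\bar c^\e)$, since the strategies agree with $(\bar\pi,\bar c)$ on $[t+\e,T]$, another application of \autoref{prop:well-RU} from the random initial condition $X^\e(t+\e)$ yields $Y_i^\e(t+\e)=\frac{\th_i(t+\e)}{1-\g}X^\e(t+\e)^{1-\g}$, so that
\[
Y_i^\e(t)=\dbE_t\bigg[\int_t^{t+\e} g_i\bigl(c_i(r)X^\e(r),Y_i^\e(r)\bigr)dr+\frac{\th_i(t+\e)}{1-\g}X^\e(t+\e)^{1-\g}\bigg].
\]
I would then apply It\^o's formula to $s\mapsto \frac{\th_i(s)}{1-\g}X^\e(s)^{1-\g}$ on $[t,t+\e]$ under the perturbed dynamics driven by $(\pi,c)$, use \rf{M-HJB} to substitute out $\dot\th_i$, and invoke BSDE stability (so that $X^\e(r)\to x$ and $Y_i^\e(r)\to\frac{\th_i(t)}{1-\g}x^{1-\g}$ uniformly on $[t,t+\e]$ as $\e\to 0^+$). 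At every Lebesgue point of $(\pi,c)$ this yields
\[
\lim_{\e\to 0^+}\frac{Y_i^\e(t)-\frac{\th_i(t)}{1-\g}x^{1-\g}}{\e}=D_i\bigl(t,x,\pi(t),c(t)\bigr)-D_i\bigl(t,x,\bar\pi(t),\bar c(t)\bigr),
\]
with $D_i$ collecting the drift of $\frac{\th_i}{1-\g}X^{1-\g}$ from It\^o plus the driver $g_i$. Averaging with weights $1/N$ and plugging in the ansatz identifies the right-hand side with
\[
\cH\bigl(t,x,\Th,V_x,V_{xx},\pi(t),c(t)\bigr)-\cH\bigl(t,x,\Th,V_x,V_{xx},\bar\pi(t),\bar c(t)\bigr)\les 0
\]
by \rf{Hamiltonian-condition}. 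Extending the limsup inequality from Lebesgue points to every $t\in[0,T)$ is routine via essential-supremum control of the time averages of measurable $(\pi,c)$.

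\ms

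The hardest step will be the stability and asymptotic analysis of the coupled multi-dimensional BSDE \rf{M-BSDE} on the shrinking interval $[t,t+\e]$: the driver $g_i$ is non-Lipschitz, with a factor $y^{1-\a/(1-\g)}$ singular near $y=0$, and $Y_i^\e$ appears inside its own driver. Making the expansion rigorous requires (i) a uniform lower bound on $Y_i^\e(r)$ inherited from the positivity of $\th_i(t+\e)$ and the terminal condition, which restores local Lipschitz behaviour of $g_i$, (ii) an a priori estimate of the form $\sup_{r\in[t,t+\e]}\bigl|Y_i^\e(r)-\frac{\th_i(t)}{1-\g}x^{1-\g}\bigr|=O(\sqrt{\e})$ in a suitable conditional norm, and (iii) the standard control of quadratic-variation cross-terms generated by the It\^o expansion, which must be of order $o(\e)$. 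These ingredients can be obtained, after some work, by adapting the perturbation arguments of \cite{Wang-Yong-Zhou2022}; once in hand, the Hamiltonian inequality closes the proof.
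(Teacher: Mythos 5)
Your proposal is correct and follows the same verification skeleton as the paper: use \autoref{prop:well-RU} to identify $Y_i^\e(t+\e)=\frac{\th_i(t+\e)}{1-\g}X^\e(t+\e)^{1-\g}$ (after checking that $(\bar\pi,\bar c)$ turns \rf{main1-prop:well-RU} into \rf{M-HJB}), expand $s\mapsto\frac{\th_i(s)}{1-\g}X^\e(s)^{1-\g}$ by It\^o on $[t,t+\e]$, and close with the Hamiltonian maximality \rf{Hamiltonian-condition}, i.e. $\cH^\l(r;\bar\pi,\bar c)=\max_{(\pi,c)}\cH^\l(r;\pi,c)=0$. The one genuine difference is in what you call the hardest step: the paper does not need any stability theory for the coupled non-Lipschitz BSDE on $[t,t+\e]$. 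Because the perturbation $(\pi,c)$ lies in $(L_+^\i(0,T))^N\times(L_+^\i(0,T))^N$ and is deterministic, \autoref{prop:well-RU} applies a second time on $[t,t+\e]$ with terminal datum $\th_i(t+\e)$, giving the Markovian representation $Y_i^\e(r)=\frac{\th_i^\e(r)}{1-\g}X^\e(r)^{1-\g}$ with $\th_i^\e$ solving a deterministic ODE; the asymptotics then reduce to the elementary bounds $\sup_{r\in[t,t+\e]}|\th_i^\e(r)-\th_i(r)|^2\les K\e$ and $\dbE_t[\sup_{r\in[t,t+\e]}|X^\e(r)-\bar X(r)|^2]\les K\e$, so the singular driver never has to be confronted directly. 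Your items (i)--(iii), adapted from \cite{Wang-Yong-Zhou2022}, would presumably work but are heavier than necessary here. A minor further remark: the detour through Lebesgue points and an exact limit is not needed, since the conclusion only requires the one-sided bound $\int_t^{t+\e}\bar X(r)^{1-\g}\big[\cH^\l(r;\pi,c)-\cH^\l(r;\bar\pi,\bar c)\big]dr\les 0$ for every fixed $t$, which holds pointwise in $r$ by \rf{Hamiltonian-condition}; the limsup inequality \rf{def-equilibrium1} then follows directly from the $o(\e)$ expansion.
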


\ms

To guarantee the well-posedness of \rf{M-HJB}, we introduce the following conditions.

\bs

\noindent
{\bf (A1)} The coefficients $\g$, $\a$, and $\rho_i$ satisfy
\bel{Assum-HJB}
\g\in(0,1),\qq \max_{i=1,...,N}\rho_i\les\a\nu+{(\mu-\nu)^2\over 2\g\si},
\ee
or
\bel{Assum-HJB'}
\g\in[1-\a,1).
\ee

\bs

\begin{theorem}\label{thm:well-posedness}
Let {\rm (A1)} hold.
Then the equilibrium ODE \rf{M-HJB} admits a unique positive solution.
\end{theorem}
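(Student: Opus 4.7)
\textbf{Proof proposal for \autoref{thm:well-posedness}.}

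The plan is to treat \rf{M-HJB} as a (terminal-value) initial value problem on the open positive cone $\cO=(0,\i)^N$. On $\cO$ the right-hand side of \rf{M-HJB} is $C^1$ in $\th=(\th_1,\dots,\th_N)$ because all the exponents $\b\deq{1-\g-\a\over(1-\a)(1-\g)}$ and ${1\over\a-1}$ are applied to strictly positive quantities, so Picard--Lindel\"of supplies a unique maximal positive $C^1$ solution $\th=(\th_1,\dots,\th_N)$ on a left-neighbourhood $(T-\d,T]$. The whole game is then to show that this local solution extends to $[0,T]$, i.e.\ to derive a priori two-sided bounds
$$
0<m\les\th_i(t)\les M,\qq t\in[0,T],\q i=1,\dots,N,
$$
with $m,M$ independent of the interval of existence. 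Once such bounds are available, standard ODE continuation gives global existence, and local uniqueness upgrades to uniqueness on $[0,T]$.

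My first step would be to exploit the cancellation obtained by summing the $N$ scalar equations. Writing $v(t)\deq\sum_{j=1}^N\th_j(t)$ and $S(t)\deq\sum_{k=1}^N\th_k(t)^\b$, summation of \rf{M-HJB} gives
$$
\dot v+(1-\g)\k\, v-(1-\g)\a^{-1}\sum_{i=1}^N\rho_i\th_i+(1-\g)(\a^{-1}-1)\,v^{\a/(\a-1)}S=0,
$$
with $\k\deq \nu+{(\mu-\nu)^2\over 2\g\si^2}$ and $v(T)=N$. Because $\a^{-1}-1>0$ and $S>0$, the last term has a definite sign; combined with the linear term in $v$ and the bound on $\rho_i$ this yields a coercive one-sided estimate of the form $\dot v\les (1-\g)(\k-\a^{-1}\min_i\rho_i)v$, producing an immediate upper bound on $v$ and hence on each $\th_i$. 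Individually, comparing $\dot\th_i$ with its linear part gives $\th_i\ges \th_i(T)\exp\!\big(-(1-\g)\int_t^T A(s)ds\big)$ for a suitable nonnegative bounded $A$, which gives the lower bound $m>0$ provided the integrand stays finite on the interval of existence.

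The real work will lie in closing this bootstrap: the nonlinear corrections involve the possibly singular powers $v^{1/(\a-1)}$ (blows up as $v\to 0^+$) and $\th_i^\b$ (blows up as $\th_i\to 0^+$ when $\b<0$, as $\th_i\to\i$ when $\b>0$), so one must show that the a priori upper bound prevents the coefficients of the linear estimates above from exploding, while the lower bound on $v$ prevents the singular powers from diverging. This is where the two alternatives in \textbf{(A1)} come in: in the case $\g\in[1-\a,1)$ one has $\b\les 0$, which gives the automatic monotone bound $\th_i^\b\les (\min_j\th_j)^\b$ and turns the singular coupling into a self-regularising term, so no condition on the discount rates is needed; in the complementary case $\g\in(0,1)$ with $\max_i\rho_i\les\a\nu+{(\mu-\nu)^2\over 2\g\si}$, the coefficient $\k-\rho_i/\a$ has a favourable sign that can be used to dominate the $v^{\a/(\a-1)}S$ term via Young's inequality. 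I would handle the two cases in parallel lemmas, each producing explicit super- and sub-solutions (likely of the form $\th_i=A$ and $\th_i=B$ for suitable constants, by direct insertion into \rf{M-HJB}).

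The main obstacle, and the step I expect to be delicate, is constructing the sub-solution that yields the strictly positive lower bound $m$, since the natural comparison requires handling a coupled system rather than a scalar inequality: the component that is nearest to zero is the one whose singular factor $v^{1/(\a-1)}$ is multiplied by other components which may be of order one. I anticipate that, following the strategy of \cite{Wang-Yong-Zhou2022}, the cleanest route is a truncation argument -- replace $\th_i$ by $\max(\th_i,1/n)$ and $\min(\th_i,n)$ in all nonlinear terms, solve the resulting globally Lipschitz system, derive the $n$-independent bounds by the comparison arguments sketched above, and pass to the limit $n\to\i$ using Arzel\`a--Ascoli -- rather than trying to run the comparison directly on the singular system.
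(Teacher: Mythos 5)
Your overall architecture (local Picard--Lindel\"of solvability on the positive cone, a priori two-sided bounds independent of the existence interval, then continuation plus local uniqueness) is the same reduction the paper makes, but the two estimates you rely on do not hold up as stated. First, the direction of the comparison for the summed quantity $v=\sum_j\th_j$ is reversed: since \rf{M-HJB} is a terminal-value problem, dropping the sign-definite term $(1-\g)(\a^{-1}-1)v^{\a/(\a-1)}S\ges 0$ yields $\dot v\les Cv$, which upon integrating backwards from $v(T)=N$ gives a \emph{lower} bound $v(t)\ges N e^{-C(T-t)}$, not an upper bound on $v$ or on the individual $\th_i$ (also the relevant coefficient involves $\max_i\rho_i$, not $\min_i\rho_i$). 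An upper bound in fact requires controlling that nonlinear term from above, and since it carries the negative powers $v^{\a/(\a-1)}$ and, when $\g>1-\a$, $\th_k^{\b}$ with $\b={1-\g-\a\over(1-\a)(1-\g)}<0$, it needs a positive lower bound on every component first --- which is exactly the order of the paper's proof (lower bound, then upper bound), and exactly the circularity you acknowledge but do not resolve.

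Second, the componentwise lower bound is the heart of the theorem --- it is where the two alternatives of (A1) enter --- and your proposal leaves it open: constant sub-solutions cannot simply be inserted componentwise because the singular coupling $\big(\sum_j\th_j\big)^{1/(\a-1)}\sum_k\th_k^{\b}$ mixes the components, and the truncated globally Lipschitz system suffers from the same circularity when you try to make the bounds $n$-independent. The paper's key device, which your plan is missing, is an ordering lemma: the difference of two components solves a linear ODE whose inhomogeneity $(1-\g)(\rho_i-\rho_1)\a^{-1}\th_i$ has a definite sign, so the component with the largest discount rate, say $\th_1$, is the smallest. This reduces the coupled lower bound to a scalar estimate for $\th_1$ alone, whose nonlinear part $F$ can then be signed: under $\g\in[1-\a,1)$ one gets $F\ges 0$ pointwise (close in spirit to your remark that $\b\les 0$ self-regularizes), giving $\th_1\ges e^{-CT}$; under the alternative $\max_i\rho_i\les\a\nu+(\mu-\nu)^2/(2\g\si)$ a continuity/contradiction argument propagating the terminal value gives $\th_1\ges 1$. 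With the lower bound in hand, the upper bound follows by elementary comparison in each case. Without the ordering lemma or a genuine substitute for it, your sketch does not close.
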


Combining \autoref{thm:VT} and \autoref{thm:well-posedness} together,
we have the following result immediately.

\begin{corollary}
Let {\rm (A1)} hold.
Then the  strategy $(\bar\pi,\bar c)$ given by \rf{U} and \rf{C} is an EPICS of the coalition.
\end{corollary}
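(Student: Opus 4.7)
The plan is straightforward: the corollary is simply a concatenation of the two immediately preceding theorems, so the proof amounts to invoking them in sequence.

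First, under assumption (A1), I would apply \autoref{thm:well-posedness} to produce the unique positive solution $(\theta_1,\dots,\theta_N)$ of the equilibrium ODE \rf{M-HJB} on the whole interval $[0,T]$. Continuity on the compact interval $[0,T]$ together with strict positivity guarantees that each $\theta_i$ is both bounded above and bounded away from zero, and hence so is the sum $v(t)=\sum_{j=1}^{N}\theta_j(t)$.

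Next, I would plug this solution into the formulas \rf{U} and \rf{C} to define the candidate strategy $(\bar\pi,\bar c)$, and briefly confirm that $(\bar\pi,\bar c)\in(L_+^\infty(0,T))^N\times(L_+^\infty(0,T))^N$, as required by \autoref{def:eps}. For $\bar\pi$ this is immediate, since \rf{U} yields a constant deterministic expression for the coalition's aggregate investment (and nonnegativity follows from $\mu>\nu$ and $\g>0$). For $\bar c_i$, the boundedness and strict positivity of $\theta_i$ and of $\sum_{j}\theta_j$ imply that the fractional powers $(\sum_j\theta_j)^{1\over\a-1}$ and $\theta_i^{1-\g-\a\over(1-\a)(1-\g)}$ are well-defined, continuous, and bounded on $[0,T]$, regardless of the sign of $1-\g-\a$. \autoref{thm:VT} then applies directly and concludes that $(\bar\pi,\bar c)$ is an EPICS of the coalition.

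The main obstacle is not in the corollary itself, whose proof is essentially a two-line composition, but in the supporting results. For \autoref{thm:VT}, one must carry out a spike-variation argument on $[t,t+\e)$ that rigorously handles the multi-dimensional backward component of the forward-backward system \rf{M-BSDE}, since standard DPP arguments do not cover controlled BSDEs of this coupled form. For \autoref{thm:well-posedness}, the difficulty is to establish global existence and strict positivity of the coupled nonlinear ODE system \rf{M-HJB}: the exponent $1-\g-\a$ appearing in \rf{C} may have either sign, so the nonlinearity cannot be controlled by a single monotonicity argument, and the two cases in (A1) must be treated separately, presumably via distinct a priori bounds tailored to each regime.
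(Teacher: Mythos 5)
Your proposal matches the paper's own argument: the corollary is obtained exactly by combining \autoref{thm:well-posedness} (existence of the unique positive solution to \rf{M-HJB} under (A1)) with \autoref{thm:VT} (the verification theorem), which the paper states follows ``immediately.'' Your additional check that $(\bar\pi,\bar c)$ lies in $(L_+^\i(0,T))^N\times(L_+^\i(0,T))^N$ via the boundedness and positivity of the $\th_i$ is a harmless and correct elaboration of the same route.
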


\section{Interpretation and numerics}\label{sec:numercis}

The equilibrium investment strategy is given by
\begin{equation*}
\sum_{i=1}^N \bar \pi_i(t)={(\mu-\nu)\over \g\si^2},\qq t\in[0,T].
\end{equation*}
It is independent of the discount rate $\rho_i$, which coincides with the case of one-agent
optimal investment-consumption problem.
We remark that the coalition  only needs to decide the sum $\sum_{i=1}^N \bar\pi_i={(\mu-r)\over \g\si^2}$ of each agent's investment strategy.

\ms

The equilibrium consumption strategy is given by
\begin{equation*}
\bar c_i(t)=\bigg(\sum_{j=1}^N\th_j(t)\bigg)^{1\over\a-1}\th_i(t)^{1-\g-\a\over (1-\a)(1-\g)} ,\qq i=1,...,N.
\end{equation*}
It shows that the coalition needs to choose an explicit consumption strategy for each agent.
Thus, although the agents are playing  a cooperation game, how to share the wealth  is still much more troublesome than
making money itself.

\subsection{Monotonic property of equilibrium consumption strategy}

\begin{proposition}\label{prop:ci-cj}
Let {\rm (A1)} hold. Then for any two given discount rates $\rho_{j_0}\ges\rho_{i_0}\ges 0$,
the corresponding equilibrium consumption strategies $\bar c_{i_0}$ and $\bar c_{j_0}$ satisfy $\bar c_{i_0}\ges \bar c_{j_0}$ if $\g\in(0,1-\a]$, and  $\bar c_{i_0}\les \bar c_{j_0}$ if $\g\in[1-\a,1)$.
\end{proposition}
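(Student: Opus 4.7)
The plan is to reduce the inequality for the consumption strategies to a monotonicity statement for the auxiliary functions $\theta_i$ appearing in the equilibrium ODE \rf{M-HJB}, and then to prove that monotonicity by a direct comparison argument. From the explicit formula \rf{C}, the ratio is
$$
\frac{\bar c_{i_0}(t)}{\bar c_{j_0}(t)} = \left(\frac{\theta_{i_0}(t)}{\theta_{j_0}(t)}\right)^{p}, \qquad p \deq \frac{1-\g-\a}{(1-\a)(1-\g)},
$$
because the common factor $\big(\sum_j \theta_j(t)\big)^{1/(\a-1)}$ cancels. Since $\a,\g\in(0,1)$, the exponent $p$ satisfies $p\ges0$ iff $\g\les 1-\a$, and $p\les0$ iff $\g\ges 1-\a$. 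Hence it suffices to establish the single monotonicity claim: if $0\les\rho_{i_0}\les\rho_{j_0}$ then $\theta_{i_0}(t)\ges\theta_{j_0}(t)$ for all $t\in[0,T]$.

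To prove this claim, I would work with $\psi(t) \deq \theta_{i_0}(t) - \theta_{j_0}(t)$ and subtract the two ODEs \rf{M-HJB} corresponding to the indices $i_0$ and $j_0$. The key structural observation is that the quantities
$$
F(t) = \nu + \frac{(\m-\nu)^2}{2\g\si^2} - \Big(\sum_j\theta_j(t)\Big)^{1/(\a-1)}\sum_k\theta_k(t)^{p},\qquad G(t) = \Big(\sum_j\theta_j(t)\Big)^{\a/(\a-1)}
$$
are symmetric in the whole system, and therefore the same for both ODEs; only the terms carrying $\theta_i$, $\rho_i$, and $\theta_i^{p}$ survive in the difference. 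Writing $\rho_{i_0}\theta_{i_0}-\rho_{j_0}\theta_{j_0} = \rho_{i_0}\psi - (\rho_{j_0}-\rho_{i_0})\theta_{j_0}$ and invoking the mean value theorem to write $\theta_{i_0}^{p}-\theta_{j_0}^{p} = p\,\xi(t)^{p-1}\psi(t)$ for some $\xi(t)$ between $\theta_{i_0}(t)$ and $\theta_{j_0}(t)$ (here using the positivity guaranteed by \autoref{thm:well-posedness}), one arrives at a \emph{linear} ODE of the form
$$
\dot\psi(t) = K(t)\psi(t) + g(t), \qquad \psi(T)=0,
$$
with source term $g(t) = -(1-\g)\a^{-1}(\rho_{j_0}-\rho_{i_0})\theta_{j_0}(t)\les 0$.

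The variation-of-constants formula with the terminal condition $\psi(T)=0$ then yields
$$
\psi(t) = -\int_t^T \exp\!\left(-\int_t^s K(r)\,dr\right) g(s)\,ds \ges 0,
$$
i.e.\ $\theta_{i_0}\ges\theta_{j_0}$ on $[0,T]$. Combining this with the sign of $p$ in the two parameter regimes gives exactly the two conclusions of the proposition.

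The main technical obstacle will be controlling the factor $\xi(t)^{p-1}$ inside $K(t)$ in the regime $\g\in[1-\a,1)$, where $p<0$ makes $\xi^{p-1}$ singular if any $\theta_i$ were to vanish. This is handled by appealing to \autoref{thm:well-posedness}, which guarantees that all $\theta_i$ remain strictly positive on $[0,T]$, so $K(t)$ is bounded on the compact interval and the exponential factor in the representation of $\psi$ is well defined. The rest of the argument is a careful bookkeeping of signs.
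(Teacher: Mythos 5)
Your proposal is correct and takes essentially the same route as the paper: the paper's own proof simply cancels the common factor $\big(\sum_j\th_j\big)^{1/(\a-1)}$, cites the monotonicity $\th_{i_0}\ges\th_{j_0}>0$ established in Step 1 of the proof of \autoref{thm:well-posedness}, and concludes from the sign of the exponent $\frac{1-\g-\a}{(1-\a)(1-\g)}$. The monotonicity lemma you re-derive (subtracting the two equations of the coupled system, applying the mean value theorem to $\th_{i_0}^{p}-\th_{j_0}^{p}$, and using variation of constants for the resulting linear ODE with nonpositive source and zero terminal value) is exactly the argument the paper uses there, so no new gap arises.
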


\begin{proof}
Recall that the agents' equilibrium consumption strategies are given by
$$
 \bar c_i(t)=\bigg(\sum_{i=1}^N\th_i(t)
\bigg)^{1\over\a-1}  \th_i(t)^{1-\g-\a\over (1-\g)(1-\a)},\qq i=1,...,N.
$$
From the proof of \autoref{thm:well-posedness}, we know that
$$
\th_{i_0}\ges \th_{j_0}>0.
$$
If $\g\in(0,1-\a]$, we have ${1-\g-\a\over (1-\g)(1-\a)}\ges 0$, and then $\bar c_{i_0}\ges \bar c_{j_0}$.
Similarly, we have $\bar c_{i_0}\les \bar c_{j_0}$ if $\g\in[1-\a,1)$.
\end{proof}

The parameter $\g\in(0,1)$ controls the risk aversion of the agents.
If the risk aversion coefficient $\g$ is smaller than the reciprocal $(1-\a)$ of the EIS $(1-\a)^{-1}$,
the agent with a smaller discount factor will be distributed more rewards
than the one with a bigger discount factor.
However, when the risk aversion coefficient $\g$ is bigger than the reciprocal  of the EIS,
the agent with a smaller discount factor will be distributed less rewards
than the one with a bigger discount factor.
Thus, the relation between the risk aversion and the EIS has an important influence on the coalition's
taste of the discount factors.

\subsection{Comparison with one-agent model}

In the one-agent model, the wealth equation and the objective are given by
$$\left\{\begin{aligned}
dX(s)&=\[\nu X(s)+(\m-\nu) \pi(s)X(s)-c(s)X(s)\]ds\\
&\q+\si \pi(s)X(s) dW(s),\qq s\in[t,T],\\
X(t)&=x,\end{aligned}\right.
$$
and
$$
J(t,\xi;\pi,c)=Y(t),
$$
respectively, with
$$Y(s)=\dbE_s\bigg[\int_s^T g(c(r)X(r),Y(r))dr+h(X(T))\bigg],\qq s\in[t,T],$$
and
$$
g(q,y)=\a^{-1}((1-\g)y)^{1-{\a\over 1-\g}}\[q^\a-\rho((1-\g)y)^{{\a\over 1-\g}}\],\qq h(x)={x^{1-\g}\over 1-\g}.
$$
By the recursive DPP established by Peng \cite{Peng1997},
the optimal investment-consumption strategy $(\pi^*,c^*)$ is given by
\bel{C-one}
\pi^*(t)={(\mu-\nu)\over \g\si^2},\qq c^*(t)=\th(t)^{-\a\over (1-\a)(1-\g)},
\ee
with $\th$ being the unique positive solution of the following ODE:
\bel{HJB-One}
\left\{
\begin{aligned}
&\dot{\th}(t)+(1-\g)\th(t)\left[\nu+{(\mu-\nu)^2\over 2\g\si^2}\right]\\
&\qq-(1-\g)\rho\a^{-1}\th(t)
+(\a^{-1}-1)(1-\g)\th(t)^{-\a\over(1-\g)(1-\a)}=0,\\
&\th(T)=1.
\end{aligned}\right.
\ee
We denote the optimal investment-consumption strategy associated with $\rho=\rho_i$ by $(\pi^*_i,c^*_i)$.
It is well-known that for the one-agent problem, we always have $c^*_i\les c^*_j$ if $\rho_i\les \rho_j$.
But for the multi-agent problem,  if $\g\in(0,1-\a)$, we can have  $\bar c_i\ges\bar c_j$ for $\rho_i\les\rho_j$.
This is a new feature of our model.

\subsection{Comparison with  CRRA model}\label{subsec:CRRA}

In this subsection, we shall compare the equilibrium strategy derived in our model with that
derived in the CRRA utility case.

\ms

\textbf{Pre-committed strategy with CRRA utility.}
When $\g=1-\a$, the recursive utility reduces to a CRRA utility.
Recall from \rf{Pre-CRRA1} that at time $t$, the pre-committed optimal consumption strategy $c^{t,*}_i$ of agent $i$ reads:
\begin{align}
 c_i^{t,*}(s)=\bigg({N\th^t(s)\over  e^{-\a\rho_i(s-t)}}\bigg)^{1\over \a-1},\qq s\in[t,T],
\label{Pre-CRRA11}
\end{align}
which depends on $i$.  Since this strategy is time-inconsistent, it is not flexible.

\ms

\textbf{Equilibrium strategy with CRRA utility.}
Taking $\g=1-\a$ in \rf{C}, we see that the  equilibrium consumption strategy of agent $i$ with CRRA utility is given by:
\bel{C1-CRRA}
 \bar c_i(t)=\bigg(\sum_{i=1}^N\th_i(t)\bigg)^{1\over\a-1} ,\qq t\in[0,T].
\ee
It is time-consistent, but it is surprising that $\bar c_i$ does not depend on $i$.
In other words, the agents take the same consumption strategy as each other,
which is unreasonable from a economic view of point.

\ms

\textbf{Equilibrium strategy with recursive utility.}
The equilibrium  consumption strategy of agent $i$ in our model is given by
\bel{}
 \bar c_i(t)=\bigg(\sum_{j=1}^N\th_j(t)\bigg)^{1\over\a-1}\th_i(t)^{1-\g-\a\over(1-\a) (1-\g)} ,\qq t\in[0,T],\qq i=1,...,N,
\ee
which is  time-consistent and also depends on $i$.
Thus, the equilibrium consumption strategy in our model appears more reasonable than that associated with the CRRA utility.

\bs
\begin{center}
\textbf{Analysis for the above phenomena}
\end{center}

From \rf{Hamiltonian-condition} and \rf{Hamiltonian}, with the ansatz \rf{ansatz}, we see that the
equilibrium investment-consumption strategy $(\bar\pi,\,\bar c)$ is determined by maximizing the following Hamiltonian:
\begin{align}\label{Hamiltonian-RW}
\BH(t,\th(t),v(t), \pi, c)
&=v(t)\bigg[\nu +(\mu-\nu)\sum_{i=1}^N \pi_i-\sum_{i=1}^N c_i\bigg]-{1\over 2}\g v(t)\bigg(\si\sum_{i=1}^N \pi_i\bigg)^2\nn\\
&\q+{1\over N}\sum_{i=1}^N\ti g_i(c_i,\th_i(t)),
\end{align}
with
\begin{align}
&\ti g_i(c_i,\th_i(t))=\a^{-1}\th_i(t)^{1-{\a\over 1-\g}}
\Big(c_i^\a-\rho_i\th_i(t)^{{\a\over 1-\g}}\Big),
\end{align}
and
\bel{}
v={1\over N}\sum_{i=1}^N\th_i.
\ee
Note that in the recursive utility case, the marginal benefit of agent $i$'s consumption $c_i$ at time $t$ is given by
\begin{align}
\partial_{c_i}\[{1\over N}\sum_{i=1}^N \ti g_i(c_i(t),\th_i(t))\]={\th_i(t)^{1-{\a\over 1-\g}}c_i(t)^{\a-1}\over N},
\end{align}
which depends not only on  agent $i$'s current consumption $c_i(t)$ but also (through the value function $\th_i(t)$) on the trajectory of
coalition's future consumptions $\{c_k\}_{k=1,...,N}$.
It leads to that $\partial_{c_i}\BH(t,\th(t),v(t), \pi, c)$, given by
\begin{align}
\partial_{c_i}\BH(t,\th(t),v(t), \pi, c)
&=-v(t)+\partial_{c_i}\[{1\over N}\sum_{i=1}^N \ti g_i(c_i(t),\th_i(t))\]\nn\\
&=-v(t)+{\th_i(t)^{1-{\a\over 1-\g}}c_i^{\a-1}\over N},
\end{align}
depends on both coalition's value function $v$ and agent $i$'s value function $\th_i$.
Thus, the corresponding equilibrium  consumption strategy $\bar c_i$ of agent $i$  depends on the parameter $i$.

\ms
In the CRRA utility case, the aggregator becomes an additively separable one:
\bel{}
\ti g_i(c_i,\th_i(t))=\a^{-1}
\Big(c_i^\a-\rho_i\th_i(t)\Big),
\ee
and then the marginal benefit of agent $i$'s consumption $c_i$ at time $t$ reduces to
\begin{align}
\partial_{c_i}\[{1\over N}\sum_{i=1}^N \ti g_i(c_i(t),\th_i(t))\]
=\partial_{c_i}\[{1\over N}\sum_{i=1}^N \a^{-1}
\big(c_i^\a-\rho_i\th_i(t)\big)\]
={c_i(t)^{\a-1}\over N},
\end{align}
which only depends on agent $i$'s current consumption $c_i(t)$. Then,
\begin{align}
\partial_{c_i}\BH(t,\th(t),v(t), \pi, c)
&=-v(t)+{c_i^{\a-1}\over N}.
\end{align}
Thus, in this case the equilibrium  consumption strategy $\bar c_i$ of agent $i$ is given by coalition's value function $v$,
which is independent of  the parameter $i$.

\ms
The more essential reason is that the time-inconsistent problem is an optimization problem
on the local time horizon $[t,t+\e]$. For the model with CRRA utility, the  utility
is affected by the discount factor $\rho_i$ only through the discounting function $e^{-\rho_i(s-t)}$. When $\e>0$ is small enough,
$e^{-\rho_i(s-t)}\approx 1$ over $[t,t+\e]$ for each $i=1,2,...,N$. Then the agents face almost the same
problem as each other over $[t,t+\e]$. 
However, for the  model with recursive utility, the discount factor $\rho_i$ affects the utility
not only by  the discounting function $e^{-\rho_i(s-t)}$ but also by the value function $\th_i$.
Thus, the agents' problems can be still different from each other, even if $\e>0$ is very small.

\subsection{Numerics}
\begin{center}
\textbf{The setting}
\end{center}
\autoref{fig:rho1}: We set $T=1$, $\mu=0.08$, $\nu=0.02$, $\si=0.15$, $\rho_1=0.01$, $\rho_2=0.2$,
$\g=0.1$, $\a=0.3$, $\l_1=0.5$, and $\l_2=0.5$. Note that in this setting,  the relation $\g\in(0,1-\a)$ holds.
In \autoref{fig:rho1} (a), $(\th_1,\th_2)$ is the solution of the equilibrium HJB equation \rf{M-HJB};
the functions $\th_3$ and $\th_4$ are the solutions of the  HJB equation \rf{HJB-One} in the one-agent model
associated with $\rho=\rho_1=0.01$ and $\rho=\rho_2=0.2$, respectively.
In \autoref{fig:rho1} (b),  $(c_1,c_2)$ are the equilibrium consumption strategies \rf{C}
of agent $1$ and agent $2$ in the two-agent model, respectively;
the functions $c_3$ and $c_4$ are the optimal consumption strategies \rf{C-one} in the one-agent model
associated with $\rho=\rho_1=0.01$ and $\rho=\rho_2=0.2$, respectively.

\bs

\noindent
\autoref{fig:rho2}: We set $T=1$, $\mu=0.2$, $\nu=0.1$, $\si=0.05$, $\rho_1=0$, $\rho_2=0.18$,
$\g=0.8$, $\a=0.25$, $\l_1=0.5$, and $\l_2=0.5$. In this setting, we have $\g\in(1-\a,1)$.
The meaning of $(\th_1,\th_2,\th_3,\th_4)$ and $(c_1,c_2,c_3,c_4)$ is similar to that in  \autoref{fig:rho1}.

\begin{figure}[h!]
\centering
\subfigure[]{ \includegraphics[width=0.35\linewidth]{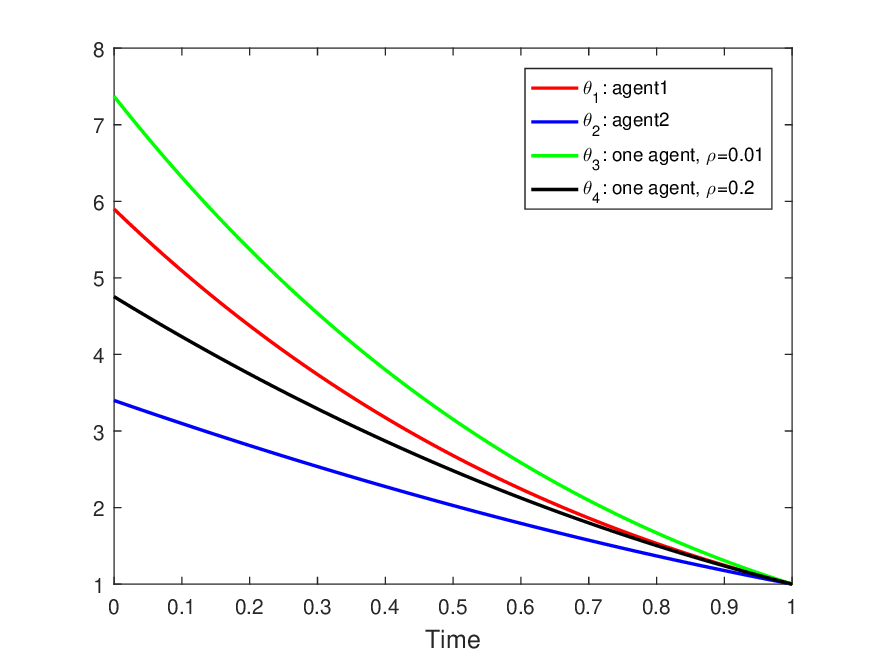}}
\subfigure[]{ \includegraphics[width=0.35\linewidth]{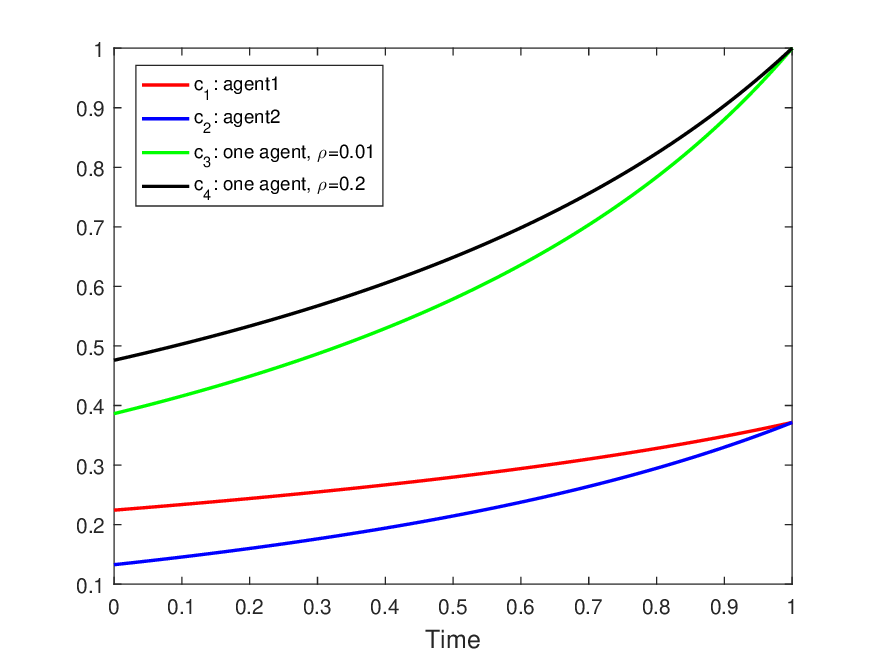}}
    \caption{The case with $\g\in(0,1-\a)$.
    }
    \label{fig:rho1}
\end{figure}

\bs
\begin{figure}[h!]
\centering
\subfigure[]{ \includegraphics[width=0.35\linewidth]{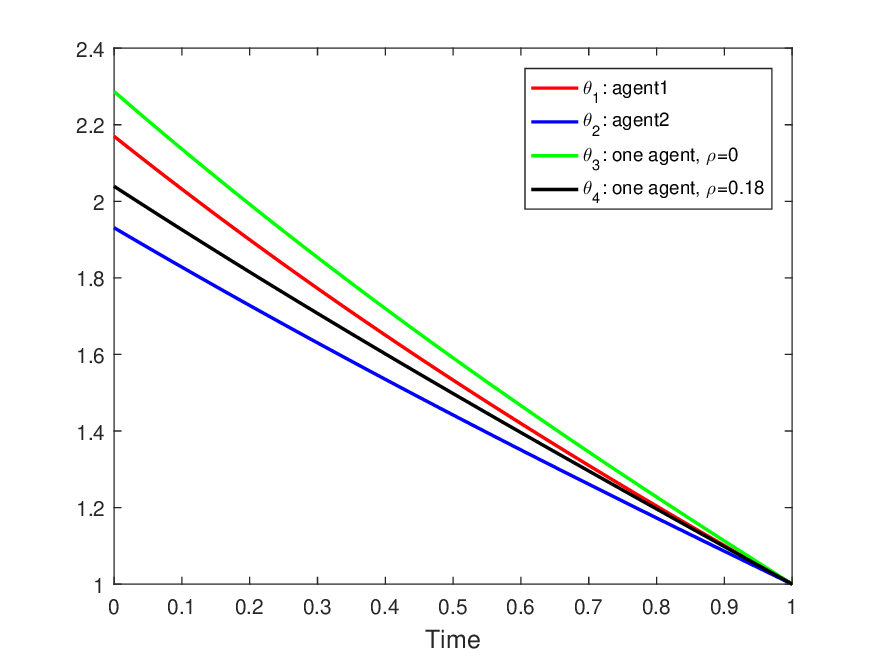}}
\subfigure[]{ \includegraphics[width=0.35\linewidth]{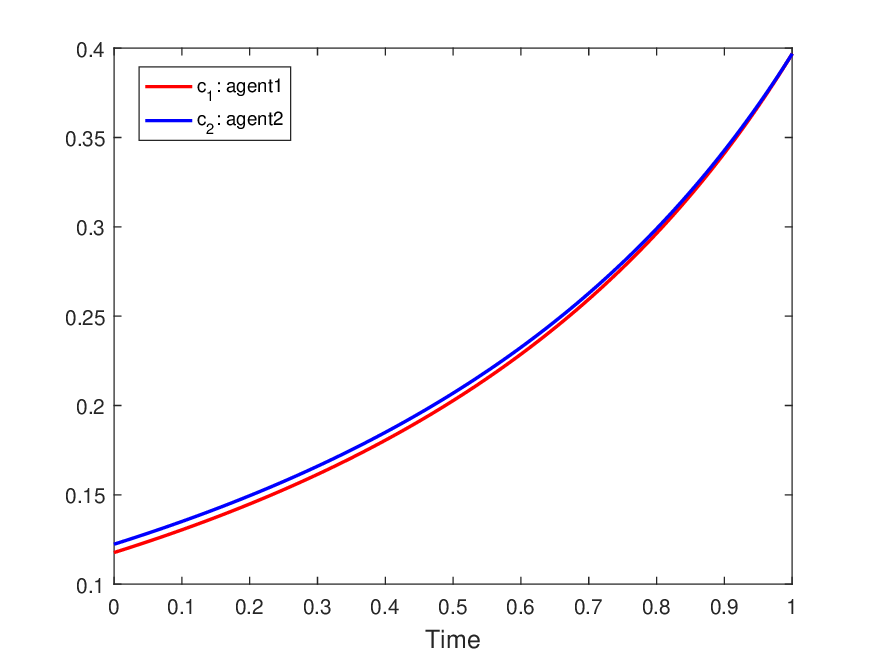}}
\subfigure[]{ \includegraphics[width=0.35\linewidth]{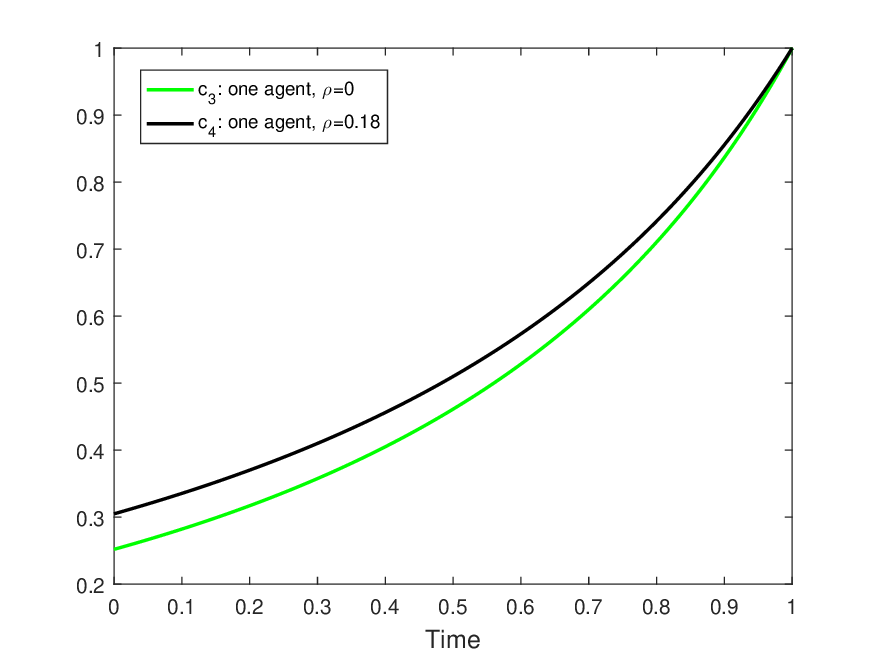}}
    \caption{The case with $\g\in(1-\a,1)$.
    }
    \label{fig:rho2}
\end{figure}

\ms
From \autoref{fig:rho1} (a) and \autoref{fig:rho2} (a), we see that $\th_1\ges \th_2$ and
 $\th_3\ges \th_4$ for both $\g\in(0,1-\a)$ and $\g\in(1-\a,1)$.
 From \autoref{fig:rho1} (b) and \autoref{fig:rho2} (c), we see that $c_3\les c_4$ holds
 for both  $\g\in(0,1-\a)$ and $\g\in(1-\a,1)$ in the one-agent model.
But in the multi-agent model, from \autoref{fig:rho2} (b) and \autoref{fig:rho2} (c) we see that
$c_1\ges c_2$ for $\g\in(0,1-\a)$ and $c_1\les c_2$ for $\g\in(1-\a,1)$.
Thus, the numerics coincides with our theoretical results.

\section{Appendix: Proof of the main results}\label{sec:Proofs}

\subsection*{Appendix A: Proof of \autoref{prop:well-RU}}
If \rf{main1-prop:well-RU} admits a unique positive solution,
then \rf{main2-prop:well-RU} can be obtained by applying the It\^{o} formula directly.
The uniqueness of positive solutions to \rf{main1-prop:well-RU} can be obtained by a standard method.
Thus, we only need to prove that \rf{main1-prop:well-RU} really admits a positive solution.
This suffices to show that if $\th_i,i=1,...,N$ is a positive solution of \rf{main1-prop:well-RU}
on $[t_0,T]$, then
$$
{\d}\les\th_i(s)\les \kappa,\q s\in[t_0,T];\q i=1,...,N,
$$
for some positive constants $\d,\kappa>0$ independent of $t_0$.
Let $\bar\th_i$ be the unique solution to the following linear ODE:
$$
\left\{
\begin{aligned}
&\dot{\bar\th}_i(s)+(1-\g)\bar\th_i(s)\bigg[(\mu-\nu)\sum_{j=1}^N\pi_j(s)-\sum_{j=1}^N c_j(s)+\nu-{\g\si^2\over 2}\bigg(\sum_{j=1}^N \pi_j(s)\bigg)^2-{\rho_i\over\a}\bigg]=0,\\
&\bar\th_i(T)=1.
\end{aligned}\right.
$$
Clearly, $\bar\th_i>0$. By the comparison theorem of ODEs, we have
\bel{proof1-prop:well-RU}
\th_i(s)\ges \d:=\inf_{r\in[0,T]}\bar\th_i(r)>0,\qq s\in[t_0,T].
\ee
Now we establish the upper bound estimate of $\th_i$.
If $\g\in(0,1-\a]$, then
$$
\a^{-1}(1-\g)\th_i(s)^{1-\g-\a\over1-\g}c_i(s)^\a\les
\a^{-1}(1-\g)[1+\th_i(s)]c_i(s)^\a.
$$
Let $\ti\th_i$ be the unique positive solution to the following linear ODE:
$$
\left\{
\begin{aligned}
&\dot{\ti\th}_i(s)+(1-\g)\ti\th_i(s)\bigg[(\mu-\nu)\sum_{j=1}^N\pi_j(s)-\sum_{j=1}^N c_j(s)+\nu-{\g\si^2\over 2}\bigg(\sum_{j=1}^N \pi_j(s)\bigg)^2-{\rho_i\over\a}\bigg]\\
&\qq
+\a^{-1}(1-\g)[1+\ti\th_i(s)]c_i(s)^\a=0,\\
&\ti \th_i(T)=1.
\end{aligned}\right.
$$
By the comparison theorem of ODEs again, we have
\bel{proof2-prop:well-RU}
\th_i(s)\les \kappa:=\sup_{r\in[0,T]}\ti\th_i(r),\qq s\in[t_0,T].
\ee
If $\g\in(1-\a,1)$, then
$$
\a^{-1}(1-\g)\th_i(s)^{1-\g-\a\over1-\g}c_i(s)^\a\les
\a^{-1}(1-\g)\d^{1-\g-\a\over1-\g}c_i(s)^\a,
$$
in which $\d>0$ is given by \rf{proof1-prop:well-RU}.
Let $\h\th_i$ be the unique positive solution to the following linear ODE:
$$
\left\{
\begin{aligned}
&\dot{\h\th}_i(s)+(1-\g)\h\th_i(s)\bigg[(\mu-\nu)\sum_{j=1}^N\pi_j(s)-
\sum_{j=1}^N c_j(s)+\nu-{\g\si^2\over 2}\bigg(\sum_{j=1}^N \pi_j(s)\bigg)^2-{\rho_i\over\a}\bigg]\\
&\qq
+\a^{-1}(1-\g)\d^{1-\g-\a\over1-\g}c_i(s)^\a=0,\\
&\h \th_i(T)=1.
\end{aligned}\right.
$$
By the comparison theorem of ODEs again, we have
\bel{proof3-prop:well-RU}
\th_i(s)\les \kappa:=\sup_{r\in[0,T]}\h\th_i(r),\qq s\in[t_0,T].
\ee
Combining \rf{proof1-prop:well-RU}, \rf{proof2-prop:well-RU}, and \rf{proof3-prop:well-RU} together, we complete the proof.

\subsection*{Appendix B: Proof of \autoref{thm:VT}}
Let $(\bar\pi_i^\e,\bar c^\e_i)$ be defined as \rf{def-equilibrium2} and \rf{def-equilibrium3}.
Then the corresponding state $X^\e$ and utility $Y_i^\e$
are given by
$$\left\{\begin{aligned}
dX^\e(s)&=\bigg[\nu X^\e(s)+(\m-\nu)\sum_{i=1}^N \bar\pi_i(s)X^\e(s)-\sum_{i=1}^N\bar c_i(s)X^\e(s)\bigg]ds\\
&\q+\si \sum_{i=1}^N \bar\pi_i(s)X^\e(s) dW(s),\qq s\in[t+\e,T];\\
dX^\e(s)&=\bigg[\nu X^\e(s)+(\m-\nu)\sum_{i=1}^N \pi_i(s)X^\e(s)-\sum_{i=1}^N c_i(s)X^\e(s)\bigg]ds\\
&\q+\si \sum_{i=1}^N \pi_i(s)X^\e(s) dW(s),\qq s\in[t,t+\e),\\
X^\e(t)&=\bar X(t),\end{aligned}\right.
$$
and
\begin{align}
Y^\e_i(s)&=\dbE_s\bigg[\int_s^T g_i(\bar c_i(r) X^\e(r),Y^\e_i(r))dr+h_i(X^\e(T))\bigg],\qq s\in[t+\e,T];\nn\\
Y^\e_i(s)&=\dbE_s\bigg[\int_s^{t+\e} g_i( c_i(r) X^\e(r),Y^\e_i(r))dr+Y^\e_i(t+\e)\bigg],\qq s\in[t,t+\e],\label{Proof-Y1}
\end{align}
respectively. 
By applying the It\^{o} formula to the mapping $s\mapsto {1\over 1-\g}X^\e(s)^{1-\g}\th_i(s)$ on $[t+\e,T]$, 
we get
$$
Y_i^\e(t+\e)={1\over 1-\g}X^\e(t+\e)^{1-\g}\th_i(t+\e),
$$
where $\th_i$ is the solution to \rf{M-HJB}.
Substituting the above into \rf{Proof-Y1} yields that
\begin{align}
Y^\e_i(s)&=\dbE_s\bigg[\int_s^{t+\e} g_i( c_i(r) X^\e(r),Y^\e_i(r))dr+{\th_i(t+\e) X^\e(t+\e)^{1-\g}\over 1-\g}\bigg],\qq s\in[t,t+\e].\nn
\end{align}
Next, by applying the It\^o formula to the mapping $s\mapsto{\th_i(s) X^\e(s)^{1-\g}\over 1-\g}$ on $[t,t+\e]$, we have
\begin{align}
Y^\e_i(s)&
=\dbE_s\bigg[\int_s^{t+\e} g_i( c_i(r) X^\e(r),Y^\e_i(r))dr+{\th_i(t)\bar X(t)^{1-\g}\over 1-\g}\nn\\
&\q+{\th_i(t+\e) X^\e(t+\e)^{1-\g}\over 1-\g}-{\th_i(t)\bar X(t)^{1-\g}\over 1-\g}\bigg]\nn\\
&=\dbE_s\bigg[\int_s^{t+\e} g_i( c_i(r) X^\e(r),Y^\e_i(r))dr+{\th_i(t)\bar X(t)^{1-\g}\over 1-\g}\nn\\
&\q+\int_s^{t+\e}X^\e(r)^{1-\g}\bigg[ {\dot{\th}_i(r) \over 1-\g}+\th_i(r)\Big[\nu +(\m-\nu)\sum_{j=1}^N \pi_j(r)-\sum_{j=1}^N c_j(r)\Big]\nn\\
&\q-{\g\over 2}\th_i(r)\si^2 \Big(\sum_{j=1}^N \pi_j(r)\Big)^2\bigg]dr\bigg].
\label{proof-VT-2}
\end{align}
By \autoref{prop:well-RU}, the following ODE admits a unique positive soluiton $\th_i^\e$:
$$
\left\{
\begin{aligned}
&\dot{\th}^\e_i(s)+(1-\g)\th^\e_i(s)\bigg[(\mu-\nu)\sum_{j=1}^N\pi_j(s)-\sum_{j=1}^N c_j(s)
+\nu-{\g\si^2\over 2}\bigg(\sum_{j=1}^N \pi_j(s)\bigg)^2-{\rho_i\over\a}\bigg]\\
&\qq
+\a^{-1}(1-\g)\th^\e_i(s)^{1-\g-\a\over1-\g}c_i(s)^\a=0,\\
&\th^\e_i(t+\e)=\th_i(t+\e).
\end{aligned}\right.
$$
Then
$$
Y_i^\e(r)={\th_i^\e (r)X^\e(r)^{1-\g}\over 1-\g},\qq r\in[t,t+\e],
$$
which implies that
$$
g_i( c_i(r) X^\e(r),Y^\e_i(r))=\a^{-1}\[c_i(s)^\a\th_i^\e(r)^{1-{\a\over1-\g}}-\rho_i\th_i^\e(r)\]X^\e(r)^{1-\g},\qq r\in[t,t+\e].
$$
Substituting the above into \rf{proof-VT-2} yields that
\begin{align}
Y^\e_i(t)&
=\dbE_s\bigg[\int_s^{t+\e} g_i( c_i(r) X^\e(r),Y^\e_i(r))dr+{\th_i(t)\bar X(t)^{1-\g}\over 1-\g}\nn\\
&\q+\int_s^{t+\e}\bigg[ {\dot{\th}_i(r) X^\e(r)^{1-\g}\over 1-\g}+\th_i(r)X^\e(r)^{-\g}\Big(\nu X^\e(r)+(\m-\nu)\sum_{i=1}^N \pi_i(r)X^\e(r)\nn\\
&\q-\sum_{i=1}^N c_i(r)X^\e(r)\Big)-{\g\over 2}\th_i(r)X^\e(r)^{-\g-1}\si^2 \Big(\sum_{i=1}^N \pi_i(r)\Big)^2X^\e(r)^2\bigg]dr\bigg]\nn\\
&=\dbE_t\bigg[{\th_i(t)\bar X(t)^{1-\g}\over 1-\g}+\int_t^{t+\e}X^\e(r)^{1-\g}\bigg[ {\dot{\th}_i(r) \over 1-\g}\nn\\
&\q+\th_i(r)\Big(\nu +(\m-\nu)\sum_{j=1}^N \pi_j(r)-\sum_{j=1}^N c_j(r)\Big)-{\g\over 2}\th_i(r)\si^2 \Big(\sum_{j=1}^N \pi_j(r)\Big)^2\nn\\
&\q+\a^{-1}\(c_i(s)^\a\th_i^\e(r)^{1-{\a\over1-\g}}-\rho_i\th_i^\e(r)\)\bigg]dr\bigg].
\label{proof-VT-3}
\end{align}
Then, by the standard estimates of SDEs, we have
$$
\dbE_t\[\sup_{r\in[t,t+\e]}|X^\e(r)-\bar X(r)|^2\]\les K\e.$$
Noting that $\th_i^\e(t+\e)=\th_i(t+\e)$ and the derivatives $\dot\th_i^\e$ and $\dot\th_i$ are bounded,
we get
$$
\sup_{r\in[t,t+\e]}|\th_i^\e(r)-\th_i(r)|^2\les K\e,
$$
where $K>0$ is a constant which depends on the fixed $\pi$ and $c$, but is independent of $\e$.
Thus, we have
\begin{align}
Y^\e_i(t)&
%
%
%
=\dbE_t\bigg[{\th_i(t)\bar X(t)^{1-\g}\over 1-\g}+\int_t^{t+\e}\bar X(r)^{1-\g}\bigg[ {\dot{\th}_i(r) \over 1-\g}\nn\\
&\q+\th_i(r)\Big(\nu +(\m-\nu)\sum_{j=1}^N \pi_j(r)-\sum_{j=1}^N c_j(r)\Big)-{\g\over 2}\th_i(r)\si^2 \Big(\sum_{j=1}^N \pi_j(r)\Big)^2\nn\\
&\q+\a^{-1}\(c_i(s)^\a\th_i(r)^{1-{\a\over1-\g}}-\rho_i\th_i(r)\)\bigg]dr\bigg]+o(\e).\nn
\end{align}
It follows that
\begin{align}
&J^\l (t,\bar X(t);\bar\pi^\e, \bar c^\e)={1\over N}\sum_{i=1}^N Y^\e_i(t)\nn\\
&\q=\dbE_t\bigg[{\bar X(t)^{1-\g}\over N(1-\g)}\sum_{i=1}^N\th_i(t)+\int_t^{t+\e}\bar X(r)^{1-\g}\cH^\l(r;\pi,c)dr\bigg]+o(\e),\nn
\end{align}
where
\begin{align}
\cH^\l(r;\pi,c)&= {\sum_{i=1}^N\dot{\th}_i(r) \over N(1-\g)}+{1\over N}\sum_{i=1}^N\th_i(r)\bigg[\nu +(\m-\nu)\sum_{j=1}^N \pi_j(r)-\sum_{j=1}^N c_j(r)\nn\\
&\q-{\g\si^2\over 2} \(\sum_{j=1}^N \pi_j(r)\)^2\bigg]+{1\over N\a}\sum_{i=1}^N\(c_i(s)^\a\th_i(r)^{1-{\a\over1-\g}}-\rho_i\th_i(r)\).\nn
\end{align}
Note that
\begin{align}
&J^\l (t,\bar X(t);\bar\pi, \bar c)=\dbE_t\bigg[{\bar X(t)^{1-\g}\over N(1-\g)}\sum_{i=1}^N\th_i(t)+\int_t^{t+\e}\bar X(r)^{1-\g}\cH^\l(r;\bar\pi,\bar c)dr\bigg],\nn
\end{align}
and
$$
\cH^\l(r;\bar\pi,\bar c)=\max_{(\pi,c)}\cH^\l(r;\pi, c)=0.
$$
Then it is easilly checked that \rf{def-equilibrium1} holds, which completes the proof.
\subsection*{Appendix C: Proof of \autoref{thm:well-posedness}}
Similar to the proof of \autoref{prop:well-RU}, it suffices to show that if $\th_i,i=1,...,N$ is a postive solution of \rf{M-HJB}
on $[t_0,T]$, then
$$
{\d}\les\th_i(s)\les \kappa,\q s\in[t_0,T];\q i=1,...,N,
$$
for some positive constants $\d,\kappa>0$ independent of $t_0$.

\ms

\textbf{Step 1. Monotonicity of $\th_i$ in $\rho_i$.}
Without loss of generality, we let $\rho_1\ges\rho_i$ for $i=2,...,N$.
Denote $\D\th_i=\th_1-\th_i$.
Then
$$
\left\{
\begin{aligned}
&\D\dot{\th}_i(t)+(1-\g)\D\th_i(t)\bigg[\nu+{(\mu-\nu)^2\over 2\g\si^2}-\sum_{i=1}^N\bigg({\sum_{j=1}^N\th_j(t)
\over \th_i(t)^{1-\g-\a\over 1-\g}}\bigg)^{1\over\a-1}\bigg]\\
&\qq-(1-\g)\rho_1\a^{-1}\D\th_i(t)+(1-\g)(\rho_i-\rho_1)\a^{-1}\th_i(t)\\
&\qq+{1-\g-\a\over\a(1-\a)}\bigg({1\over N}\sum_{j=1}^N\th_j(t)
\bigg)^{\a\over\a-1}\int_0^1\big[l\th_1(t)+(1-l)\th_i(t)\big]^{-\a\g\over(1-\a)(1-\g)}dl\D\th_i(t)=0,\\
&\D\th_i(T)=0.
\end{aligned}\right.
$$
Clearly, the above is a linear ODE with the unknown $\D\th_i$ and the nonhomogeneous  term
$(1-\g)(\rho_i-\rho_1)\a^{-1}\th_i$. Using the fact
$$(1-\g)(\rho_i-\rho_1)\a^{-1}\th_i(t)\les 0,\qq t\in[t_0,T],$$
we have $\D\th_i\les 0$, which implies that
\bel{Proof-WP1}
\th_1\les\th_i,\qq i=2,...,N.
\ee

\ms

\textbf{Step 2. Low bound estimate of $\th_i$.}
We now show that
$$\th_i\ges \d;\qq i=2,...,N,$$
for some $\d>0$.
From \rf{Proof-WP1}, we only need to prove $\th_1\ges \d$.

\ms

We first prove the result under the assumption \rf{Assum-HJB}.
Denote
\begin{align*}
F(t)&=-(1-\g)\th_1(t)\sum_{i=1}^N\bigg({\sum_{j=1}^N\th_j(t)
\over \th_i(t)^{1-\g-\a\over 1-\g}}\bigg)^{1\over\a-1}\\
&\q+\a^{-1}(1-\g)\th_1(t)^{1-\g-\a\over1-\g}\bigg({\sum_{j=1}^N\th_j(t)
\over \th_1(t)^{1-\g-\a\over 1-\g}}\bigg)^{\a\over\a-1}.
\end{align*}
Note that
\begin{align*}
F(T)=(1-\g)(\a^{-1}-1)>0,
\end{align*}
and
$$
\th_1(t)= 1+\int_t^T \bigg\{(1-\g)\[\nu+{(\mu-\nu)^2\over 2\g\si^2}-\rho_1\a^{-1}\]\th_1(s) +F(s)\bigg\}ds.
$$
%
Then there exists an $S\in[t_{0},T)$ such that $\th_1(t)\ges 1$ on $[S,T]$.
Let
\bel{Proof-WP2}
\t\deq \max\big\{t\in[t_0,T]\,|\, \th_1(t)<1\big\}.
\ee
If $\t=t_0$, then the desired result is proved with $\d=1$.
Thus, we assume that $\t>t_0$. By the continuity of $\th_1$, we get $\th_1(\t)=1$.
Note that
$$
\th_i(t)\ges\th_1(t)\ges 1,\qq t\in [\t,T],\qq\hbox{and}\qq {1-\g-\a\over (1-\g)(1-\a)}<1,
$$
we have
\begin{align*}
F(\t)
&=\bigg[-(1-\g)\sum_{i=1}^N\Big(\th_i(t)^{1-\g-\a\over 1-\g}\Big)^{1\over 1-\a}\bigg(\sum_{j=1}^N\th_j(t)\bigg)^{1\over \a-1}\\
&\qq+\a^{-1}(1-\g)\sum_{i=1}^N\th_i(t)\bigg(\sum_{j=1}^N\th_j(t)\bigg)^{1\over \a-1}\bigg]\bigg|_{t=\t}\\
&\ges \bigg[(\a^{-1}-1)(1-\g)\sum_{i=1}^N\th_i(t)\bigg(\sum_{j=1}^N\th_j(t)\bigg)^{1\over \a-1}\bigg]\bigg|_{t=\t}\\
&>0.
\end{align*}
Thus for some $\e>0$, we have
\begin{align*}
\th_1(t)&= \th_1(\t)+\int_t^\t \[(1-\g)\Big(\nu+{(\mu-\nu)^2\over 2\g\si^2}-\rho_1\a^{-1}\Big)\th_1(s) +F(s)\]ds\\
&= 1+\int_t^\t \[(1-\g)\Big(\nu+{(\mu-\nu)^2\over 2\g\si^2}-\rho_1\a^{-1}\Big)\th_1(s) +F(s)\]ds\\
&\ges 1,\qq t\in[\t-\e,\t],
\end{align*}
which contradicts \rf{Proof-WP2}. This completes the proof under the assumption \rf{Assum-HJB}.

\ms

We next prove the result under the assumption \rf{Assum-HJB'}.
Note that
\begin{align*}
F(t)
&=-(1-\g)\th_1(t)\sum_{i=1}^N\bigg({\sum_{j=1}^N\th_j(t)
\over \th_i(t)^{1-\g-\a\over 1-\g}}\bigg)^{1\over\a-1}\\
&\q+\a^{-1}(1-\g)\th_1(t)^{1-\g-\a\over1-\g}\bigg({\sum_{j=1}^N\th_j(t)
\over \th_1(t)^{1-\g-\a\over 1-\g}}\bigg)^{\a\over\a-1}\\
&=-(1-\g)\th_1(t)\sum_{i=1}^N\bigg({\sum_{j=1}^N\th_j(t)
\over \th_i(t)^{1-\g-\a\over 1-\g}}\bigg)^{1\over\a-1}\\
&\q+\a^{-1}(1-\g)\th_1(t)^{1-\g-\a\over1-\g}{\sum_{i=1}^N\th_i(t)\left(\sum_{j=1}^N\th_j(t)\right)^{1\over \a-1}
\over \th_1(t)^{1-\g-\a\over 1-\g}\left(\th_1(t)^{1-\g-\a\over 1-\g}\right)^{1\over \a-1}}.
\end{align*}
Using the fact $\th_i\ges\th_1$, we have
\begin{align*}
F(t)
&\ges -(1-\g)\th_1(t)\sum_{i=1}^N\bigg({\sum_{j=1}^N\th_j(t)
\over \th_i(t)^{1-\g-\a\over 1-\g}}\bigg)^{1\over\a-1}\\
&\q+\a^{-1}(1-\g)\th_1(t){\sum_{i=1}^N\th_1(t)^{1-\g-\a\over 1-\g}\left(\sum_{j=1}^N\th_j(t)\right)^{1\over \a-1}
\over \th_1(t)^{1-\g-\a\over 1-\g}\left(\th_1(t)^{1-\g-\a\over 1-\g}\right)^{1\over \a-1}}\\
&= -(1-\g)\th_1(t)\sum_{i=1}^N\bigg({\sum_{j=1}^N\th_j(t)
\over \th_i(t)^{1-\g-\a\over 1-\g}}\bigg)^{1\over\a-1}\\
&\q+\a^{-1}(1-\g)\th_1(t)N{\left(\sum_{j=1}^N\th_j(t)\right)^{1\over \a-1}
\over \left(\th_1(t)^{1-\g-\a\over 1-\g}\right)^{1\over \a-1}}.
\end{align*}
Under  \rf{Assum-HJB'}, we have $1-\g-\a\les 0$. Then,
\begin{align*}
F(t)&\ges -(1-\g)\th_1(t)\sum_{i=1}^N\bigg({\sum_{j=1}^N\th_j(t)
\over \th_1(t)^{1-\g-\a\over 1-\g}}\bigg)^{1\over\a-1}\\
&\q+\a^{-1}(1-\g)\th_1(t)N{\left(\sum_{j=1}^N\th_j(t)\right)^{1\over \a-1}
\over \left(\th_1(t)^{1-\g-\a\over 1-\g}\right)^{1\over \a-1}}\\
&= (\a^{-1}-1)(1-\g)\th_1(t)N{\left(\sum_{j=1}^N\th_j(t)\right)^{1\over \a-1}
\over \left(\th_1(t)^{1-\g-\a\over 1-\g}\right)^{1\over \a-1}}\\
&\ges 0.
\end{align*}
Thus,
\begin{align*}
\th_1(t)&\ges e^{\int_t^T(1-\g)\big[\nu+{(\mu-\nu)^2\over 2\g\si^2}-\rho_1\a^{-1}\big]ds}\\
&\ges  e^{-T|1-\g|\big|\nu+{(\mu-\nu)^2\over 2\g\si^2}-\rho_1\a^{-1}\big|}=:\d>0,
\end{align*}
which completes the proof under the assumption \rf{Assum-HJB'}.

\ms

\textbf{Step 2. Upper bound estimate of $\th_i$.}
We first prove the result under the assumption \rf{Assum-HJB}.
Note that under the assumption $\rho_1\les \a\nu+{(\mu-\nu)^2\over 2\g\si}$, we have $\th_i\ges 1$.
It follows that
\begin{align*}
\th_i(t)^{1-\g-\a\over1-\g}\bigg({\sum_{j=1}^N\th_j(t)
\over \th_i(t)^{1-\g-\a\over 1-\g}}\bigg)^{\a\over\a-1}
&=\th_i(t)^{1-\g-\a\over1-\g}\th_i(t)^{\a(1-\g-\a)\over(1-\g)(1-\a)}\bigg(\sum_{j=1}^N\th_j(t)
\bigg)^{\a\over\a-1}\\
&=\th_i(t)^{(1-\g-\a)\over(1-\g)(1-\a)}\bigg(\sum_{j=1}^N\th_j(t)\bigg)^{\a\over\a-1}\\
&\les N^{\a\over\a-1}\th_i(t).
\end{align*}
Thus,
\begin{align*}
&-(1-\g)\th_i(t)\bigg(\sum_{j=1}^N\th_j(t)\bigg)^{1\over\a-1}\sum_{k=1}^N\th_k(t)^{1-\g-\a\over (1-\a)(1-\g)}
+\a^{-1}(1-\g)\th_i(t)^{1-\g-\a\over1-\g}\bigg({\sum_{j=1}^N\th_j(t)\over \th_i(t)^{1-\g-\a\over 1-\g}}\bigg)^{\a\over\a-1}\\
&\q\les \a^{-1}(1-\g) N^{\a\over\a-1}\th_i(t).
\end{align*}
Then from \rf{M-HJB}, we have
\begin{align*}
\th_i(t)&\les e^{(1-\g)\big[\nu+{(\mu-\nu)^2\over 2\g\si^2}-\rho_i\a^{-1}+\a^{-1} N^{\a\over\a-1}\big](T-t)}\\
&\les e^{\big[\nu+{(\mu-\nu)^2\over 2\g\si^2}-\rho_i\a^{-1}+\a^{-1} N^{\a\over\a-1}\big]T}=:\k.
\end{align*}

\ms

We next prove the result under the assumption \rf{Assum-HJB'}.
Note that in this case, ${1-\g-\a\over1-\g}\les 0$, and thus
\begin{align}
&\th_i(t)^{1-\g-\a\over1-\g} \les \d^{1-\g-\a\over1-\g},\qq
\bigg({1\over N}\sum_{i=1}^N\th_i(t)\bigg)^{1\over \a-1}\les \d^{1\over \a-1},\nn\\
& \bigg({1\over N}\sum_{i=1}^N\th_i(t)^{1-\g-\a\over 1-\g}\bigg)^{1\over 1-\a}\les\d^{1-\g-\a\over (1-\a)(1-\g)}.\nn
\end{align}
From the above, we see that all the singular terms in \rf{M-HJB} are bounded.  
Then one can easily find a constant $\kappa>0$, independent of $t_0$, such that
$$
\th_i(t)\les\kappa.
$$

\section*{Acknowledgement}
Hanxiao Wang is supported  by NSFC Grant 12201424,
Guangdong Basic and Applied Basic Research Foundation 2023A1515012104,
and Research Team Cultivation Program of Shenzhen University (2023QNT011).
Chao Zhou is supported by  NSFC Grant 11871364
and Singapore MOE  AcRF Grants A-800453-00-00, R-146-000-271-112, R-146-000-284-114.
The authors  would like to thank Prof. Jiongmin Yong (of UCF) for some suggestive comments
and Dr. Luchan Zhang (of SZU) for some help on numerics.

\end{document}